\definecolor{darkgreen}{rgb}{0,0.4,0}
\definecolor{BrickRed}{rgb}{0.65,0.08,0}
\newcommand{\polya}{P{\'o}lya~}
\newcommand{\PR}{\mathbb{P}}
\newcommand{\LandauO}{\CMcal{O}}
\newcommand{\E}{\mathbb{E}}
\newcommand{\Tc}{\CMcal{T}}
\newcommand{\N}{\mathbb{N}}
\newcommand{\R}{{\mathbb R}}
\newcommand{\Aut}{\operatorname{Aut}}
\newcounter{mycount}
\theoremstyle{plain}
\newtheorem{theorem}[mycount]{Theorem}
\newtheorem{corollary}[mycount]{Corollary}
\newtheorem{lemma}[mycount]{Lemma}
\newtheorem{proposition}[mycount]{Proposition}
\theoremstyle{remark}
\newtheorem{remark}{Remark}
\theoremstyle{definition}
\newtheorem{definition}{Definition}
\theoremstyle{remark}
\newtheorem{example}{Example}
\numberwithin{equation}{section} \numberwithin{figure}{section}
\begin{document}
\title{On the shape of random P\'olya structures}
\author{Bernhard Gittenberger, Emma Yu Jin and Michael Wallner}
\thanks{Corresponding author email: gittenberger@dmg.tuwien.ac.at. This work was supported by the Austrian Research Fund (FWF), 
grant SFB F50-03. }
\thanks{A preliminary version of this paper was published in the Proceedings of ANALCO 2017.}
\address{Institut f\"{u}r Diskrete Mathematik und Geometrie, Technische Universit\"{a}t Wien, Wiedner Hauptstr. 8--10/104, 1040 Vienna, Austria}
\email{gittenberger@dmg.tuwien.ac.at}
\email{yu.jin@tuwien.ac.at}
\email{michael.wallner@tuwien.ac.at}

\begin{abstract}
Panagiotou and Stufler recently proved an important fact on their way to establish the scaling limits of random \polya trees:
a uniform random \polya tree of size $n$ consists of a conditioned critical Galton-Watson tree $C_n$ and many small forests, where with probability tending to one, as $n$ tends to infinity, any forest $F_n(v)$, that is attached to a node $v$ in $C_n$, is maximally of size $\vert F_n(v)\vert=O(\log n)$.
Their proof used the framework of a Boltzmann sampler and deviation inequalities.

In this paper, first, we employ a unified framework in analytic combinatorics to prove this fact with additional improvements for $\vert F_n(v)\vert$, namely $\vert F_n(v)\vert=\Theta(\log n)$.
Second, we give a combinatorial interpretation of the rational weights of these forests and the defining substitution process
in terms of automorphisms associated to a given \polya tree.
Third, we derive the limit probability that for a random node $v$ the attached forest $F_n(v)$ is of a given size. Moreover, structural properties of those forests like the number of their components are studied. Finally, we extend all results to other P\'{o}lya structures.
\end{abstract}


\keywords{\polya tree; \polya enumeration theorem; simply generated tree; rooted identity tree; 
automorphism group of trees}

\maketitle

\section{Introduction}\label{sec:intro}
In this section we first recall the asymptotic estimation of the number of \polya trees with $n$ nodes from the literature \cite{niwi78,otte48,poly37}.
Then, we briefly discuss simply generated trees and give finally an outline of the paper. 

\subsection{\polya trees}
A {\em \polya tree} is a rooted unlabelled non-plane tree.
The {\em size} of a tree is given by the number of its nodes. We denote by $t_{n}$ the number of \polya trees of size $n$ and by $T(z) = \sum_{n \geq 1} t_n z^n$ the corresponding ordinary generating function.  By P\'{o}lya's enumeration theory \cite{poly37}, the generating function $T(z)$ satisfies
\begin{align}\label{E:penum1}
T(z)=z\exp\left(\sum_{i=1}^{\infty}\frac{T(z^i)}{i}\right).
\end{align}
The first few terms of $T(z)$ are then
\begin{align}\label{eq:polyaimplicit}
T(z)=z + z^2 + 2 z^3 + 4 z^4 + 9 z^5 + 20 z^6 + 48 z^7 + 115 z^8 + 286 z^9 + 719 z^{10} + \cdots,
\end{align}
(see OEIS~A$000081$, \cite{Sloane}). By differentiating both sides of (\ref{E:penum1}) with respect to $z$,  one can derive a recurrence relation of $t_n$ (see \cite[Chapter~29]{niwi78} and \cite{otte48}), which is
\begin{align*}
t_n=\frac{1}{n-1}\sum_{i=1}^{n-1}t_{n-i}\sum_{m\vert i}mt_m, \quad \mbox{ for }\, n>1,\,\, \mbox{ and }\, t_1=1.
\end{align*}
\polya \cite{poly37} showed that the radius of convergence $\rho$ of $T(z)$ satisfies $0 < \rho < 1$ and that $\rho$ is the unique singularity on the circle of convergence. Subsequently, Otter \cite{otte48} proved that $T(\rho)=1$ as well as the singular expansion
\begin{align}
	\label{eq:polyaasympt}
	T(z) &= 1 - b\left(\rho-z\right)^{1/2} + c(\rho-z) + \CMcal{O}\left((\rho-z)^{3/2}\right),
\end{align}
where $\rho \approx 0.3383219$, $b \approx 2.68112$ and $c = b^2/3 \approx 2.39614$.
Moreover, he derived
\begin{align*}
	t_n &= \frac{b \sqrt{\rho}}{2 \sqrt{\pi}} \frac{\rho^{-n}}{\sqrt{n^{3}}} \left(1 + \CMcal{O}\left(\frac{1}{n}\right)\right).
\end{align*}

\subsection{Relation between \polya and simply generated trees}
We will see that $T(z)$ is connected with the \emph{exponential generating function} of Cayley
trees, which are rooted labelled trees. ``With a minor abuse of notation'' (\emph{cf.} \cite[Ex.~10.2]{Janson12_SGT}), Cayley trees belong to the class of \emph{simply generated trees}. Simply generated trees have been introduced by Meir and Moon \cite{memo:78} to describe a weighted version of rooted trees. They are defined by the functional equation
\begin{align}\label{E:sgt}
	y(z) &= z \Phi(y(z)),\qquad \mbox{ with }\quad\Phi(z)=\sum_{j\ge 0}\phi_j\,z^j, \quad \phi_j \geq 0.
\end{align}
The power series $y(x)=\sum_{n\ge 1}y_nx^n$ has nonnegative coefficients and is the generating
function of {\em weighted} simply generated trees. One usually assumes that $\phi_0>0$ and
$\phi_j>0$ for some $j\ge 2$ to exclude the trivial cases. In particular, in the above-mentioned
sense, {\em Cayley trees} can be seen as simply generated trees which are characterized by $\Phi(z) = \exp(z)$.
It is well known that the number of rooted Cayley trees of size $n$ is given by $n^{n-1}$.
Let
\begin{equation} \label{fctnC}
C(z) = \sum_{n \geq 0} n^{n-1} \frac{z^n}{n!},
\end{equation}
be the associated exponential generating function.
Then, by construction it satisfies $C(z)=z\exp(C(z))$. In contrast, \polya trees are not simply generated (see \cite{DG10} for a simple proof of this fact).
Note that though $T(z)$ and $C(z)$ are closely related, \polya trees are not related to Cayley
trees in a strict sense, but rather to a certain class of weighted {\em unlabeled non-plane} trees, which will be called
$C$-trees in the sequel and have the {\em ordinary} generating function $C(z)$. 

Informally speaking, a \polya tree is constructed from a $C$-tree where to each node a forest is attached. The family where the forests are taken from will be called $D$-forests. So, a \polya tree is (as above ``with a minor abuse of notation'') a simply generated tree with small decorations on each of its vertices. This follows from a limit theorem by Panagiotou and Stufler \cite{PS} where trees are seen as random metric spaces which converge to a limit space, the so-called scaling limit, when suitably rescaled. 
Their proof uses advanced probabilistic methods. A goal of this paper is to understand this limit from an analytic combinatorics \cite{FS} point of view and to offer a somewhat more elementary approach to this limit theorem. We will not reprove the complete limit theorem, but we set up a description in terms of generating functions which exhibits combinatorially the above mentioned relation between \polya trees and simply generated trees. Moreover, this description allows a detailed analysis of the decorations and leads to some extensions of Panagiotou and Stufler's \cite{PS} results on the decorations. 

\subsection{Outline of the paper}
The paper is organized as follows. In Section~\ref{basics} we present the combinatorial setup and the main results. Section~\ref{S:D-forest} is devoted to the study of the size of $D$-forests and the size of the $C$-tree $C_n$ in a random \polya tree $T_n$. We also offer new proofs of some results from \cite{PS} using the analytic combinatorics framework. In Section~\ref{S:size-D} we prove Theorems~\ref{T:2} and \ref{T:312}. A detailed study of $D$-forest is the topic of Section~\ref{S:size-D}. There we study the distribution of the size a randomly chosen $D$-forest within a \polya tree as well as the distribution of the number of components.  
All the results can be generalized to further \polya structures, albeit we do not obtain as explicit expressions as in the case of \polya trees. This will be the topic of Section~\ref{other}. We conclude in Section~\ref{S:last} with some final remarks.

\section{Basic structures and main results}
\label{basics}

The generating function $C(z)$ defined in \eqref{fctnC} counts several combinatorial objects. In this section we comment on the different interpretations and answer the question of what $C$-trees really are.

For a set $\Tc$ we define two functions: a size function $|\cdot| : \Tc \to \N$ (normally the number of its nodes) and a weight function $w : \Tc \to \R$. We call $\Tc$ a combinatorial class if the number of elements $T \in \Tc$ of any given size is finite.
The (weighted) generating function $T(z)$ of $\Tc$ is given by
$$T(z) = \sum_{T \in \Tc} w(T) z^{|T|},$$
and the (weighted) exponential generating function $\hat{T}(z)$ of $\Tc$ is given by
$$\hat{T}(z) = \sum_{T \in \Tc} w(T) \frac{z^{|T|}}{|T|!} = \sum_{n \geq 0} \Big(\sum_{\substack{T \in \Tc \\ |T| = n}} w(T) \Big) \frac{z^n}{n!}.$$

First, let $\CMcal{C}_1$ be the combinatorial class of Cayley trees. These are labeled trees with the constant weight function $w(T) = 1$ for all $T \in \CMcal{C}_1$, see~\cite{poly37}. Then, $C(z)$ is the corresponding exponential generating function.

Second, let $\CMcal{C}_2$ be the combinatorial class of simply generated trees with $\Phi(z) = \exp(z)$. These are unlabeled plane trees with weight function
\begin{align}\label{E:simple}
w(T) = \prod_{k \geq 0} \left(\frac{1}{k!}\right)^{n_k(T)}
\end{align}
where $n_k(T)$ is the number of nodes of $T$ with outdegree $k$, see~\cite{Janson12_SGT,memo:78}. Then, $C(z)$ is the generating function of these trees.

Third, we can define the class $\CMcal{C}_3$ of $C$-trees after all. 

\begin{definition}
A $C$-tree is a rooted non-plane tree $T$ with weight 
\begin{align}
w(T)=\operatorname{e}(T)\prod_{k \geq 0} \left(\frac{1}{k!}\right)^{n_k(T)}
\end{align}
where $\operatorname{e}(T)$ is the number of embeddings of $T$ into the plane. 

The class of all $C$-trees is called $\CMcal{C}_3$. 
\end{definition}

\begin{remark}
The class $\CMcal{C}_3$ is the non-plane version of the class $\CMcal{C}_2$. 
\end{remark}

\begin{lemma}
The generating function of $\CMcal{C}_3$ is $C(z)$. 
\end{lemma}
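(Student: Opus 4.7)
The plan is to relate $\CMcal{C}_3$ to the class $\CMcal{C}_2$ of simply generated plane trees with $\Phi(z)=\exp(z)$, whose generating function has already been noted to be $C(z)$. The bridge is the forgetful map $\pi$ that sends a rooted plane tree to its underlying rooted non-plane tree.

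First I would observe that the plane-tree weight $\prod_{k\ge 0}(1/k!)^{n_k(P)}$ depends only on the out-degree sequence, and this sequence is invariant under $\pi$. Hence every plane tree in a fiber $\pi^{-1}(T)$ carries the same weight as $T$. Second, I would identify $|\pi^{-1}(T)|=e(T)$: by the very definition of an embedding into the plane, an element of the fiber is a choice of a linear order on the children of each vertex of $T$, taken up to plane isomorphism, and this is exactly what $e(T)$ counts. Putting these two steps together and grouping the sum over $\CMcal{C}_2$ by the image of $\pi$ yields
\begin{align*}
C(z) = \sum_{P\in \CMcal{C}_2}\prod_{k\ge 0}\left(\frac{1}{k!}\right)^{n_k(P)} z^{|P|} = \sum_{T\in \CMcal{C}_3} e(T)\prod_{k\ge 0}\left(\frac{1}{k!}\right)^{n_k(T)} z^{|T|},
\end{align*}
which is precisely the generating function of $\CMcal{C}_3$.

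As an independent check I would verify the functional equation $C_3(z)=z\exp(C_3(z))$ directly from the recursive decomposition of a $C$-tree as a root carrying a multiset of $C$-subtrees. A short calculation based on the identity $e(T)=\prod_v c(v)!/|\Aut(T)|$ (where $c(v)$ is the number of children of $v$) shows that the factor $e(T)$ at a root with subtrees $T_1,\ldots,T_k$ equals $k!/\prod_j n_j!$ (with $n_j$ the multiplicities of the isomorphism types among the subtrees) times $\prod_i e(T_i)$, and this combines with the local factor $1/k!$ so that a multiset with multiplicities $n_j$ contributes weight $(\prod_j n_j!)^{-1}\prod_i w(T_i)$. Summing over $k$ then produces the exponential $\exp(C_3(z))$, and uniqueness of the solution to $y=z\exp(y)$ with $y(0)=0$ forces $C_3=C$.

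The only delicate point in either approach is the bookkeeping that links ordered tuples (plane) to multisets (non-plane) and ensures that the automorphism factors from plane embeddings exactly cancel those coming from multiset enumeration. Indeed, the weight $e(T)\prod_{k}(1/k!)^{n_k(T)}$ is precisely tailored so that P\'olya-type enumeration of unordered unlabeled trees collapses to the same generating function as the plane (or labeled) case, which is the whole point of introducing $C$-trees.
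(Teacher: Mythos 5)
Your argument is correct, and your primary route differs from the paper's. The paper proves the lemma entirely recursively: it verifies that the $C$-tree weight satisfies $w(T)=\frac{\delta(T)}{d!}\prod_i w(T_i)^{m_i}=\prod_i \frac{w(T_i)^{m_i}}{m_i!}$ with $\delta(T)=\binom{d}{m_1,\ldots,m_k}$, which is exactly the weighted multiset recursion, and concludes $C_3(z)=z\exp(C_3(z))$, hence $C_3=C$ by uniqueness of the power series solution. Your main argument instead transports the known generating function of $\CMcal{C}_2$ along the forgetful map $\pi$ from plane to non-plane trees, using that the weight $\prod_k(1/k!)^{n_k}$ is constant on fibers and that $|\pi^{-1}(T)|=e(T)$ by the definition of plane embeddings; this makes transparent \emph{why} the weight $e(T)\prod_k(1/k!)^{n_k(T)}$ is the right one (it is the total weight of a fiber), at the cost of leaning on the statement that $\CMcal{C}_2$ has generating function $C(z)$ (which the paper only cites, so both proofs ultimately rest on an external input of comparable depth). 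Your ``independent check'' is essentially the paper's proof: the identity $e(T)=\prod_v c(v)!/|\Aut(T)|$ gives $e(T)=\frac{d!}{\prod_j m_j!}\prod_i e(T_i)^{m_i}$, which combined with the local factor $1/d!$ reproduces the multiset weight $\prod_i w(T_i)^{m_i}/m_i!$ and hence the functional equation. Both routes are sound; no gaps.
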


\begin{proof}
For a given $C$-tree $T$ of size $n$ and with root of outdegree $d$, let $T_1,T_2,\ldots,T_k$ be the distinct subtrees of the root, appearing with multiplicities $m_1,\dots,m_k$, respectively. Then, $d=m_1+m_2+\cdots+m_k$ with $m_i \geq 1$. We define 
\begin{align*}
	\delta(T):= | \{ \text{all permutations of }(\underbrace{T_1,\ldots,T_1}_{m_1},\underbrace{T_2,\ldots,T_2}_{m_2},\ldots,
\underbrace{T_k,\ldots,T_k}_{m_k}) \} |=\binom{d}{m_1,m_2,\ldots,m_k}.
\end{align*}
Hence, we get
\begin{align*}
	w(T) = \frac{\delta(T)}{d!} \prod_{i=1}^k w(T_i)^{m_i}
=\left(\frac{w(T_1)^{m_1}}{m_1!}\right)\left(\frac{w(T_2)^{m_2}}{m_2!}\right)\cdots \left(\frac{w(T_k)^{m_k}}{m_k!}\right).
\end{align*}
This implies that the generating function of the class $\CMcal{C}_3$ satisfies $C(z)=z \exp(C(z))$.
\end{proof}

Now, we turn to the introduction of $D$-forests. We begin with some preliminary observations: 
In order to analyze the dominant singularity of $T(z)$, we follow \cite{otte48, poly37}, see also \cite[Chapter~VII.5]{FS}, and we rewrite
\eqref{eq:polyaimplicit} into
\begin{align}
	\label{eq:polyadecoA}
	T(z) &= z e^{T(z)} D(z),\quad\,\mbox{ where }\,\quad D(z)=\sum_{n \geq 0 } d_n z^n = \exp\left(\sum_{i=2}^{\infty}\frac{T(z^i)}{i}\right).
\end{align}
We observe that $D(z)$ is analytic for $|z| < \sqrt{\rho}<1$ and that $\sqrt{\rho}>\rho$.
From~\eqref{eq:polyadecoA} it follows that $T(z)$ can be expressed in terms of the generating function of $C$-trees: Indeed, assume that $T(z)$ is a function $H(zD(z))$ depending on $zD(z)$.
By~\eqref{eq:polyadecoA} this is equivalent to $H(x) = x \exp(H(x))$. Yet, this is the functional
equation for the generating function of $C$-trees. As this functional equation has a unique power series solution we have $H(x) = C(x)$, and we just proved
\begin{align}
	\label{eq:polyadeco}
	T(z) &= C(z D(z) ).
\end{align}
Note that $T(z)=C(zD(z))$ is a case of a super-critical composition schema which is characterized by the fact that the dominant singularity of $T(z)$ is strictly smaller than that of~$D(z)$. In other words, the dominant singularity $\rho$ of $T(z)$ is determined by the outer function~$C(z)$. Indeed, $\rho\,D(\rho)=e^{-1}$, because $e^{-1}$ is the unique dominant singularity of~$C(z)$. Moreover, the composition schema corresponds to the substitution construction of combinatorial structures \cite{FS}. Indeed, \eqref{eq:polyadeco} shows that a \polya tree is a $C$-tree where to each vertex there has been attached a combinatorial object from a combinatorial class associated with the generating function $D(z)$. As \polya trees are trees, the $D$-structures must be forests. Inspecting the generating function $D(z)$ more closely will show that there is a natural way of defining these $D$-forests. 

\begin{definition}
Let $\operatorname{MSET}^{(\geq 2)}(\CMcal{T})$ denote the class of all multisets (or forests) of \polya trees where each of its distinct components appears at least in duplicate (see Figure~\ref{fig:smalldforests}). For $F\in\operatorname{MSET}^{(\geq 2)}(\CMcal{T})$ let $\Aut(F)$ denote the automorphism group of $F$. Moreover, let $\sigma_i$ denote the number of cycles of length $i$ in the automorphism $\sigma\in\Aut(F)$. To each $F\in\operatorname{MSET}^{(\geq 2)}(\CMcal{T})$ we assign the weight 
$$
w(F)=\frac{|\{ \sigma \in \Aut(F) ~|~ \sigma_1 = 0\}|}{|\Aut(F)|}.
$$
Then the class $(\operatorname{MSET}^{(\geq 2)}(\CMcal{T}),w)$ is called the class of $D$-forests.
\end{definition}

Our first main result is that this is indeed the combinatorially natural weighting for the $D$-forests satisfying \eqref{eq:polyadeco} as well as relating the weights of $C$-trees in terms of automorphisms associated to a given \polya tree. In particular, we will show that the cumulative weight $d_n$ (defined in \eqref{E:D}) of all such forests of size $n$ satisfies
\begin{align*}
	d_n = \sum_{\substack{F \in \operatorname{MSET}^{(\geq 2)}( \CMcal{T} )\\|F| = n}} \frac{|\{ \sigma \in \Aut(F) ~|~ \sigma_1 = 0\}|}{|\Aut(F)|}
\end{align*}
From (\ref{eq:polyaimplicit}) and (\ref{eq:polyadecoA}) one gets the first values of this sequence:
\begin{align}\label{E:D}
	D(z)=\sum_{n=0}^{\infty}d_n z^n= 1 + \frac{1}{2}z^2 + \frac{1}{3}z^3 + \frac{7}{8}z^4 + \frac{11}{30}z^5 + \frac{281}{144}z^6 +\frac{449}{840}z^7 + \cdots.
\end{align}
From \eqref{eq:polyadecoA} we can derive a recursion of $d_n$ as well. We get
\begin{align*}
d_n=\frac{1}{n}\sum_{i=2}^{n}d_{n-i}\sum_{\substack{m\vert i\\m\ne i}}mt_m, \quad\mbox{ for }\,\,n\ge 2,
\end{align*}
as well as $d_0=1$, and $d_1=0$.

Before we can formulate the first main result, we have to introduce further generating functions. 
Let $c_{n,k}$ denote the cumulative weight of all $C$-trees of size $k$ that are contained in
\polya trees of size $n$. By $t_{c,n}(u)$ and $T_c(z,u)$ we denote the corresponding generating
function and the bivariate generating function of $\left(c_{n,k}\right)_{n,k\ge 0}$, respectively, that is,
\begin{align*}
	t_{c,n}(u) &= \sum_{k=1}^n c_{n,k}u^k \quad \mbox{ and }\quad
T_c(z,u)=\sum_{n\ge 0}t_{c,n}(u)z^n.
\end{align*}
Note that $c_{n,k}$ is in general not an integer. By marking the nodes of all $C$-trees in \polya trees we find a functional equation for the bivariate generating function ${T}_c(z,u)$, which is
\begin{eqnarray}\label{E:bi1}
{T}_c(z,u)=zu\exp\left({T}_c(z,u)\right)\exp\left(\sum_{i=2}^{\infty}\frac{{T}(z^i)}{i}\right)
=zu\exp\left({T}_c(z,u)\right)D(z).
\end{eqnarray}
\begin{figure}
	\begin{center}
	\includegraphics[width=0.6\textwidth]{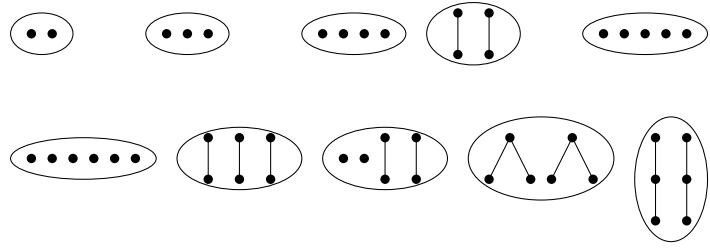}
	\end{center}
	\caption{All $D$-forests of size $2,3,4,5,6$.}
	\label{fig:smalldforests}
\end{figure}
Now we are ready to state the first main result: 
\begin{theorem}\label{T:2}
Let $\CMcal{T}$ be the set of all \polya trees, and $\operatorname{MSET}^{(\geq 2)}(\CMcal{T})$ be the multiset (or forest) of \polya trees where each tree appears at least twice if it appears at all.  
Then the cumulative weight $d_n$ (defined in \eqref{E:D}) of all such forests of size $n$ satisfies
\begin{align*}
	d_n = \sum_{\substack{F \in \operatorname{MSET}^{(\geq 2)}( \CMcal{T} )\\|F| = n}} \frac{|\{ \sigma \in \Aut(F) ~|~ \sigma_1 = 0\}|}{|\Aut(F)|}
\end{align*}
where $\Aut(F)$ is the automorphism group of $F$. Furthermore, the polynomial associated to $C$-trees in \polya trees of size $n$ is given by
\begin{align*}
	t_{c,n}(u)=\sum_{
	T \in \CMcal{T}, \; \vert T\vert=n} t_{T}(u), \quad \mbox{ where }\,\,t_{T}(u)=\frac{1}{|\Aut(T)|} \sum_{\sigma \in \Aut(T)} u^{\sigma_1}.
\end{align*}
In particular, for all $T\in \CMcal{T}$, we have $t_T'(1)=|\CMcal{P}(T)|$
where $\CMcal{P}(T)$ is the set of all trees which are obtained by pointing (or coloring) one single node in $T$.
\end{theorem}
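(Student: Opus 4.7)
The plan is to attack the three assertions in turn: the first by expanding $D(z)$ as an infinite product over Pólya trees and identifying coefficients combinatorially, the second by invoking the Pólya cycle index series of $\CMcal{T}$, and the third by a direct application of Burnside's lemma.

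For the identity for $d_n$, the starting point is the factorisation
$$
D(z) = \prod_{T \in \CMcal{T}} \exp\!\left(\sum_{i \geq 2} \frac{z^{i|T|}}{i}\right) = \prod_{T \in \CMcal{T}} \frac{e^{-z^{|T|}}}{1 - z^{|T|}}.
$$
Each factor expands as $\sum_{m \geq 0} (D_m/m!)\, z^{m|T|}$, where $D_m$ denotes the number of derangements of $m$ elements. Since $D_1 = 0$, the coefficient $[z^n]D(z)$ restricts to multiplicity vectors corresponding to multisets $F \in \operatorname{MSET}^{(\geq 2)}(\CMcal{T})$ of size $n$, each contributing $\prod_T D_{m_T(F)}/m_T(F)!$. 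To identify this coefficient with $w(F)$, one invokes the wreath-product description $\Aut(F) \cong \prod_T \Aut(T) \wr S_{m_T(F)}$ together with the crucial observation that every automorphism of a rooted tree fixes the root. This forces every fixed node of $\sigma \in \Aut(F)$ to lie in a tree-copy that is itself fixed by the outer permutation, so $\sigma_1 = 0$ is equivalent to this outer permutation being a derangement for every tree type $T$ (while the inner automorphisms $\alpha_{T,j} \in \Aut(T)$ remain arbitrary). A short count then yields $|\{\sigma : \sigma_1 = 0\}| = \prod_T D_{m_T(F)}\, |\Aut(T)|^{m_T(F)}$ and $|\Aut(F)| = \prod_T |\Aut(T)|^{m_T(F)}\, m_T(F)!$, producing exactly $w(F) = \prod_T D_{m_T(F)}/m_T(F)!$.

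For $t_{c,n}(u)$, the plan is to pass to the cycle index series
$$
Z(p_1, p_2, \ldots) = \sum_{T \in \CMcal{T}} \frac{1}{|\Aut(T)|} \sum_{\sigma \in \Aut(T)} \prod_{i \geq 1} p_i^{\sigma_i},
$$
which satisfies the classical Pólya functional equation $Z(p_1, p_2, \ldots) = p_1 \exp\!\bigl(\sum_{i \geq 1} Z(p_i, p_{2i}, p_{3i}, \ldots)/i\bigr)$. Specialising $p_1 \mapsto uz$ and $p_i \mapsto z^i$ for $i \geq 2$ gives $\hat{T}(z, u) := Z(uz, z^2, z^3, \ldots) = \sum_{T \in \CMcal{T}} t_T(u)\, z^{|T|}$, since only $\sigma_1$ is exposed to the variable $u$ while $\sum_i i\sigma_i = |T|$ governs the $z$-exponent. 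The inner evaluations $Z(z^i, z^{2i}, \ldots)$ collapse to $T(z^i)$ for $i \geq 2$, and the $i = 1$ term reproduces $\hat{T}(z, u)$. Hence $\hat{T}(z, u) = uz\,\exp(\hat{T}(z, u))\,D(z)$, which matches \eqref{E:bi1} exactly; uniqueness of the power-series solution then yields $T_c(z, u) = \hat{T}(z, u)$, and the stated identity follows by extracting coefficients.

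The final assertion $t'_T(1) = |\CMcal{P}(T)|$ is immediate: differentiating $t_T(u)$ and setting $u = 1$ gives $t'_T(1) = \frac{1}{|\Aut(T)|} \sum_\sigma \sigma_1 = \frac{1}{|\Aut(T)|} \sum_\sigma |\operatorname{Fix}(\sigma)|$, which by Burnside's lemma counts the orbits of $\Aut(T)$ on the vertex set of $T$, that is $|\CMcal{P}(T)|$. The main technical hurdle will be bookkeeping with wreath products in the first claim; once the root-fixing property of tree automorphisms is noticed, the identification of the derangement product with $w(F)$ becomes transparent, and the cycle-index manipulation in the second claim then falls out as the natural refinement by number of fixed points.
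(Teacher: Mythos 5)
Your proposal is correct. For the first assertion you and the paper share the decisive observation --- every automorphism of a rooted tree fixes its root, so a fixed point of $\sigma\in\Aut(F)$ can only live in a component that the induced permutation of components fixes, whence $\sigma_1=0$ is equivalent to that permutation being fixed-point-free --- but the bookkeeping differs: the paper reads the weight off the cycle-index expansion $D(z)=\sum_k Z(S_k;0,T(z^2),\dots,T(z^k))$ and merely asserts that the inner automorphisms cancel from numerator and denominator, while your factorisation $D(z)=\prod_{T}e^{-z^{|T|}}/(1-z^{|T|})$ together with $\Aut(F)\cong\prod_T\Aut(T)\wr S_{m_T}$ makes that cancellation explicit through the derangement numbers; this buys a fully verifiable identity at the cost of wreath-product notation. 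For the second assertion the paper obtains $t_T(u)=Z(\Aut(T);u,1,\dots,1)$ by a direct combinatorial reading of fixed points as $C$-tree nodes, whereas you check that $\sum_T t_T(u)z^{|T|}$ satisfies the defining equation \eqref{E:bi1} by specialising the plethystic cycle-index equation for rooted trees --- a cleaner way to certify that this sum really equals $T_c(z,u)$, at the price of invoking species-level machinery. The genuine divergence is the last claim: the paper proves $t_T'(1)=|\CMcal{P}(T)|$ (Lemma~\ref{L:markingTc}) by induction on $|T|$, averaging fixed-point counts over the permutations of isomorphic root subtrees, while you get it in one line from Burnside's lemma, since $|\CMcal{P}(T)|$ is precisely the number of vertex orbits of $\Aut(T)$. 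Your argument is shorter, needs no induction, and is close in spirit to the cycle-pointing alternative that the paper only mentions in a remark.
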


Note that the decomposition of a \polya tree into a $C$-tree and $D$-forests is in general not unique, 
because the $D$-forests consist of \polya trees and their vertices are not distinguishable from those of the $C$-tree, see Figure~\ref{F:0}. 
%
\begin{figure}[htbp]
\begin{center}
\includegraphics[scale=1.2]{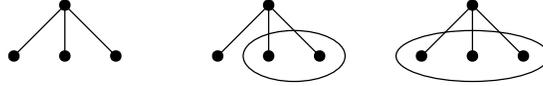}
\caption{The decomposition of a \polya tree with four nodes into a $C$-tree (non-circled nodes)
and $D$-forests (circled nodes). For this \polya tree there are three different decompositions.\label{F:0}}
\end{center}
\end{figure}
However, for a given \polya tree $T$ the polynomial $t_{T}(u)$ gives rise to a probabilistic interpretation of the composition scheme~\eqref{eq:polyadeco} (see also Example~\ref{ex:probinterpretation} in Section~\ref{S:size-D}): Let $T_n$ denote a uniform random \polya tree of size $n$. Then select one automorphism $\sigma$ of $T_n$ uniformly at random and let $C_n$ be the (random) subtree of $T_n$ consisting of all fixed points of $\sigma$. The tree $C_n$ is indeed a $C$-tree, 
\emph{i.e.}, the remaining vertices in $T_n$ form a set of $D$-forests. So, fixing a \polya tree together with one of its automorphisms uniquely defines a decomposition into a $C$-tree and a set of 
$D$-forests. 

The coefficient of $u^k$ can then be interpreted as the probability that the underlying $C$-tree is of size $k$. In other words, $t_{T}(u)$ is the probability generating function of the random variable $C_T$ of the number of $C$-tree nodes in the tree $T$ defined by
\begin{align}
	\label{eq:probCT}
	\PR(C_T = k) := [u^k] t_{T}(u).
\end{align}
The random variable $C_T$ is a refinement of $T_n$ in the following sense: 
\begin{align*}
	\PR(C_T = k) = \PR\left(|C_n| = k ~|~ T_n = T\right)
\end{align*}

\begin{figure}[htbp]
\begin{center}
\includegraphics[scale=0.7]{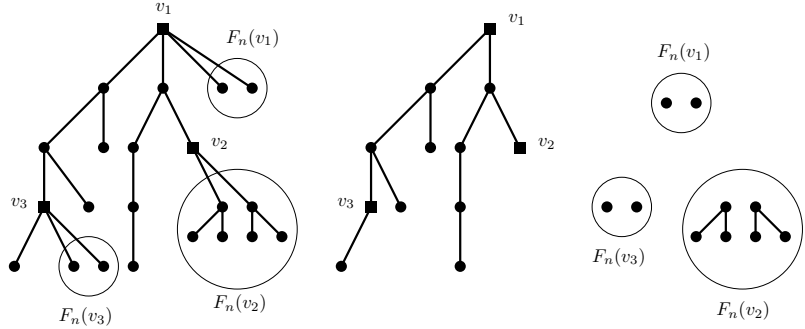}
\caption{A random \polya tree $T_n$ (left), a (possible) $C$-tree $C_n$ (middle) that is contained in $T_n$ where all $D$-forests $F_n(v)$, except $F_n(v_1),F_n(v_2),F_n(v_3)$ (right), are empty.\label{F:1}}
\end{center}
\end{figure}

Now, let us turn to the second main result. Select a random \polya tree $T_n$ and one of its
automorphisms and consider the random $C$-tree $C_n$ given by this choice. For every vertex
$v$ of $C_n$, we use $F_n(v)$ to denote the $D$-forest that is attached to the vertex $v$ in $T_n$, see Figure~\ref{F:1}.

Let $L_n$ be the maximal size of a $D$-forest contained in $T_n$, that is, $\vert F_n(v)\vert \le L_n$ holds for all $v$ of $C_n$ and the inequality is sharp. 
\begin{theorem}\label{T:1}
For $0<s<1$,
\begin{eqnarray}\label{E:mainone}
(1-(\log n)^{-s})\left(\frac{-2\log n}{\log\rho}\right)\le L_n \le (1+(\log n)^{-s})\left(\frac{-2\log n}{\log\rho}\right)
\end{eqnarray}
holds with probability $1-o(1)$.
\end{theorem}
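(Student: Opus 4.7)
The plan is to reduce the event $\{L_n\le m\}$ to a ratio of coefficients of analytic generating functions and then apply singularity analysis uniformly in $m=m(n)$. Set
\[
D_{\le m}(z):=\sum_{j=0}^{m}d_j z^j,\qquad T_m(z):=C(zD_{\le m}(z)).
\]
By the probabilistic interpretation of the composition scheme~\eqref{eq:polyadeco} discussed after Theorem~\ref{T:2} (choose a uniform random automorphism $\sigma$ of $T_n$, whose fixed points form $C_n$ and whose other orbits on subtrees form the $D$-forests $F_n(v)$), the event $\{L_n\le m\}$ corresponds exactly to replacing $D$ by $D_{\le m}$ in the composition. Consequently,
\[
\PR(L_n\le m)=\frac{[z^n]\,T_m(z)}{[z^n]\,T(z)}.
\]

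\textbf{Step 1 (singularity of $T_m$).} Since $D_{\le m}$ is a polynomial and $C(x)=x\,e^{C(x)}$ has a unique dominant square-root singularity at $x=e^{-1}$, the dominant singularity $\rho_m$ of $T_m$ is the positive solution of $\rho_m D_{\le m}(\rho_m)=e^{-1}$. One has $\rho<\rho_m\searrow\rho$ as $m\to\infty$, and a transfer theorem (uniform in $m$) yields
\[
[z^n]\,T_m(z)=\frac{b_m\sqrt{\rho_m}}{2\sqrt{\pi}}\,\rho_m^{-n}\,n^{-3/2}\,(1+o(1)),
\]
with $b_m\to b$ from~\eqref{eq:polyaasympt}.

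\textbf{Step 2 (gap $\rho_m-\rho$).} Implicit differentiation of $\rho_m D_{\le m}(\rho_m)=\rho D(\rho)=e^{-1}$ reduces $\rho_m-\rho$ to a positive constant multiple of $D(\rho)-D_{\le m}(\rho)=\sum_{j>m}d_j\rho^j$. Since the dominant singularity of $D$ is at $\sqrt{\rho}$ and is of square-root type (inherited from the term $T(z^2)/2$ in~\eqref{eq:polyadecoA}), singularity analysis gives $d_j\sim c_0\,\rho^{-j/2}j^{-3/2}$, whence
\[
\rho_m-\rho\sim c_2\,\rho^{m/2}\,m^{-3/2}\qquad(m\to\infty).
\]

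\textbf{Step 3 (conclusion).} Combining Steps~1 and~2 with Otter's asymptotic for $t_n$,
\[
\PR(L_n\le m)=\exp\!\Bigl(-c_3\,n\,\rho^{m/2}\,m^{-3/2}\Bigr)\,(1+o(1)).
\]
Setting $m=m_{\pm}:=(1\pm(\log n)^{-s})\,(-2\log n/\log\rho)$ gives $\rho^{m_\pm/2}=n^{-(1\pm(\log n)^{-s})}$. Hence $n\,\rho^{m_+/2}=\exp(-(\log n)^{1-s})\to 0$, while $n\,\rho^{m_-/2}\,m_-^{-3/2}\gtrsim\exp((\log n)^{1-s})/(\log n)^{3/2}\to\infty$, using $1-s>0$. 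Therefore $\PR(L_n\le m_+)\to 1$ and $\PR(L_n>m_-)\to 1$, jointly giving~\eqref{E:mainone}.

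\textbf{Main obstacle.} The heart of the proof is to make Steps~1 and~2 \emph{quantitatively uniform} in $m=m(n)\to\infty$. One needs an explicit uniform remainder in the transfer theorem applied to $T_m$, together with a uniform version of the implicit-function estimate $\rho_m-\rho\sim c_2\,\rho^{m/2}m^{-3/2}$ (verifying in particular that the $\Delta$-domain and the prefactor can be chosen independently of $m$ for $m$ large enough). Once these uniformities are in place, the square-root tail of the $d_j$'s drives the whole argument.
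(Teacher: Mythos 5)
Your proposal is correct in outline and, in substance, re-derives the very result the paper invokes as a black box: the paper's proof simply cites Gourdon's Theorem~4 and Corollary~3 for super-critical composition schemas to obtain $\PR[L_n\le m]=\exp(-c_1 n\,m^{-3/2}\rho^{m/2})(1+\LandauO(e^{-m\varepsilon}))$, together with $\E L_n=-\tfrac{2\log n}{\log\rho}-\tfrac{3}{\log\rho}\log\log n+\LandauO(1)$ and $\V\mathrm{ar}\,L_n=\LandauO(1)$, and then concludes by Chebyshev's inequality with $\varepsilon_n=(\log n)^{-s}$. Your Steps 1--3 are precisely the mechanism behind Gourdon's theorem (truncate the inner function to $D_{\le m}$, locate the shifted singularity $\rho_m$ via $\rho_m D_{\le m}(\rho_m)=e^{-1}$, estimate $\rho_m-\rho$ by the tail $\sum_{j>m}d_j\rho^j$ using the square-root singularity of $D$ at $\sqrt{\rho}$, and transfer), and your Step~3 is arguably cleaner than the paper's route since you evaluate $\PR(L_n\le m_\pm)$ directly at the two thresholds and need no variance bound. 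Your endgame computations ($\rho^{m_\pm/2}=n^{-(1\pm(\log n)^{-s})}$, so $n\rho^{m_+/2}m_+^{-3/2}\to 0$ and $n\rho^{m_-/2}m_-^{-3/2}\to\infty$) are correct. Two remarks: first, the reduction $\PR(L_n\le m)=[z^n]C(zD_{\le m}(z))/t_n$ is valid, but only because the probability space is ``uniform tree \emph{plus} uniform automorphism,'' so that the decomposition weights $1/|\Aut(T)|$ sum to $t_n$; it is worth stating this explicitly. Second, by the paper's Lemma on $D(z)$ the coefficients $d_j$ carry an oscillating factor $\xi(\sqrt{\rho})+(-1)^j\xi(-\sqrt{\rho})$, so your $d_j\sim c_0\rho^{-j/2}j^{-3/2}$ is not literally an asymptotic equivalence; however, only the order $\Theta(\rho^{m/2}m^{-3/2})$ of the tail enters, so this is cosmetic. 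The one genuine issue you yourself flag --- uniformity in $m=m(n)$ of the singularity location and of the transfer --- is exactly the content of Gourdon's theorem; if you do not wish to prove that uniformity from scratch, you should cite it, at which point your argument and the paper's coincide up to the final step.
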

Our second main result is a proof of Theorem~\ref{T:1} by applying the unified framework of Gourdon \cite{Gourdon}. A big-$O$ result for the upper bound was given by Panagiotou and Stufler~\cite[Eq.~(5.5)]{PS}.

Finally, we derive the limiting probability that for a random node $v$ the attached forest $F_n(v)$ is of a given size. This result is consistent with the Boltzmann sampler from \cite{PS}. The precise statement of our third main result is the following:
\begin{theorem}\label{T:312}
The generating function $T^{[m]}(z,u)$ of \polya trees, where each vertex is marked by $z$, and each weighted $D$-forest of size $m$ is marked by $u$, is given by
	\begin{align}\label{E:bi2}
		T^{[m]}(z,u) &= C \left( u z d_m z^m + z \left( D(z) - d_m z^m \right) \right),
	\end{align}
where $d_m=[z^m]D(z)$.
The probability that the $D$-forest $F_n(v)$ attached to a random $C$-tree node $v$ is of size $m$ is given by
	\begin{align*}
		\PR\left( \vert F_n(v)\vert = m \right) = \frac{d_m \rho^m}{D(\rho)} \left( 1 + \LandauO\left(n^{-1}\right)\right).
	\end{align*}
\end{theorem}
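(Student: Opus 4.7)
The plan is to derive~\eqref{E:bi2} directly from the substitution schema $T(z)=C(zD(z))$ of~\eqref{eq:polyadeco}. The generating function for $D$-forests splits as $D(z)=\bigl(D(z)-d_m z^m\bigr)+d_m z^m$, so marking each size-$m$ forest by~$u$ amounts to replacing the piece $d_m z^m$ with $u\,d_m z^m$ in the substituted argument. This substitution is legitimate because, by~\eqref{eq:polyadeco}, each $C$-tree node carries exactly one independent $D$-forest, and composing with $C$ then immediately yields the claimed formula.

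For the probability, I would compute two expectations by differentiating at $u=1$. From~\eqref{E:bi2}, the generating function for the expected number of size-$m$ forests per \polya tree is $\partial_u T^{[m]}(z,u)\big|_{u=1}=d_m z^{m+1}C'(zD(z))$, while~\eqref{E:bi1} (equivalently $T_c(z,u)=C(zuD(z))$) gives the expected number of $C$-tree vertices as $\partial_u T_c(z,u)\big|_{u=1}=zD(z)\,C'(zD(z))$. Under the size-biased interpretation of a uniformly chosen $C$-tree vertex, $\PR(|F_n(v)|=m)$ equals the ratio of the $n$-th coefficients of these two series. To estimate them I use the standard singular expansion $C(x)=1-\sqrt{2(1-ex)}+\LandauO(1-ex)$ of the Cayley tree function at its unique singularity $x=e^{-1}$, which gives $C'(x)=\frac{e}{\sqrt{2(1-ex)}}+\LandauO(1)$. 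Since $\rho D(\rho)=e^{-1}$ and $D$ is analytic for $|z|<\sqrt{\rho}$, Taylor expansion yields $1-ezD(z)=K(\rho-z)+\LandauO((\rho-z)^2)$ with $K=e\bigl(D(\rho)+\rho D'(\rho)\bigr)$, hence $C'(zD(z))=\frac{e}{\sqrt{2K(\rho-z)}}+\LandauO(1)$ on a suitable $\Delta$-domain at $\rho$.

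Applying the transfer theorems of singularity analysis to $g(z)C'(zD(z))$ with $g$ analytic at~$\rho$ yields the common asymptotic form $g(\rho)\cdot\frac{e}{\sqrt{2K}}\cdot\frac{\rho^{-n-1/2}}{\sqrt{\pi n}}\bigl(1+\LandauO(n^{-1})\bigr)$. Taking $g(z)=d_m z^{m+1}$ and $g(z)=zD(z)$ and dividing, the singular prefactors cancel and one obtains $\frac{d_m\rho^{m+1}}{\rho D(\rho)}\bigl(1+\LandauO(n^{-1})\bigr)=\frac{d_m\rho^m}{D(\rho)}\bigl(1+\LandauO(n^{-1})\bigr)$, as required. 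The main obstacle is upgrading the error from the naive $\LandauO(n^{-1/2})$ to $\LandauO(n^{-1})$: this forces one to carry the $c(\rho-z)$ term of~\eqref{eq:polyaasympt} through the computation, producing a remainder of order $(\rho-z)^{1/2}$ in $C'(zD(z))$ whose $n$-th coefficients are of order $\rho^{-n}n^{-3/2}$, exactly a factor $n^{-1}$ below the leading term. The $\Delta$-analytic continuation of $C'(zD(z))$ beyond~$\rho$ required for the transfer theorems follows from Otter's result that $\rho$ is the unique singularity of $T$ on its circle of convergence, and the bookkeeping must be done carefully so that the $\LandauO(n^{-1})$ estimate survives division of the two series.
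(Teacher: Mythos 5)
Your proposal is correct and follows essentially the same route as the paper: the formula \eqref{E:bi2} is obtained by splitting $D(z)$ and marking the size-$m$ part inside the composition $T(z)=C(zD(z))$, and the probability is the ratio $[z^n]\partial_u T^{[m]}(z,1)\big/[z^n]\partial_u T_c(z,1)$, evaluated by singularity analysis using the analyticity of $D(z)$ at $z=\rho$. The paper simply packages your computation more compactly by writing $\partial_u T^{[m]}(z,1)=T_c(z)\,\frac{d_m z^m}{D(z)}$, so that the factor $\frac{d_m z^m}{D(z)}$ is analytic at $\rho$ and the $\LandauO(n^{-1})$ error follows at once from the half-integer-power singular expansion of $T_c(z)=T(z)/(1-T(z))$, which is the same bookkeeping you describe.
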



\section{The maximal size of a \texorpdfstring{$D$}{D}-forest}
\label{S:D-forest}
We will use the generating function approach from \cite{Gourdon} to analyze the maximal size $L_n$ of $D$-forests in a random \polya tree $T_n$, which provides a new proof of Theorem~\ref{T:1}. Following the same approach, we can establish a central limit theorem for the random variable $\vert C_n\vert$, which has been done in \cite{Ben2} for the more general random $\CMcal{R}$-enriched trees.

{\em Proof of Theorem~\ref{T:1}}.
In (5.5) of \cite{PS}, only an upper bound of $L_n$ is given. By directly applying Gourdon's results (Theorem~4 and Corollary $3$ of \cite{Gourdon}) for the super-critical composition schema, we find that for any positive $m$,
\begin{align*}
\mathbb{P}[L_n\le m]&=\exp\left(-\frac{c_1n}{m^{3/2}}\rho^{m/2}\right)(1+\CMcal{O}(\exp(-m\varepsilon))),
\end{align*}
where 
\begin{align*} 
c_1 &\sim \frac{b}{2\sqrt{\pi}(1-\sqrt{\rho})(D(\rho)+\rho D'(\rho))},
\end{align*}
as $n\to \infty$.
Moreover, the maximal size $L_n$ satisfies asymptotically, as $n\rightarrow \infty$,
\begin{align*}
\mathbb{E}L_n=-\frac{2\log\,n}{\log \rho}-\frac{3}{2}\frac{2}{\log \rho}\log\log\,n+\CMcal{O}(1)\quad\mbox{ and }\quad
\mathbb{V}\mbox{ar}\,L_n=\CMcal{O}(1).
\end{align*}
By using Chebyshev's inequality, one can prove that $L_n$ is highly concentrated around the mean~$\mathbb{E}L_n$. We set $\varepsilon_n=(\log n)^{-s}$ where $0<s<1$ and we get
\begin{eqnarray*}
\mathbb{P}(\vert L_n-\mathbb{E}L_n\vert\ge \varepsilon_n\cdot\mathbb{E}L_n)
\le\frac{\mathbb{V}\mbox{ar}\,L_n}{\varepsilon_n^2\cdot(\mathbb{E}L_n)^2}=o(1),
\end{eqnarray*}
which means that (\ref{E:mainone}) holds with probability $1-o(1)$.
\qed

It was shown in \cite{Ben2} that the size $\vert C_n\vert$ of the $C$-tree $C_n$ in $T_n$ satisfies a central limit theorem and $\vert C_n\vert=\Theta(n)$ holds with probability $1-o(1)$. The precise statement is the following.
\begin{theorem}[\!\!{\cite[Eq.~(3.9) and~(3.10)]{Ben2}, \cite[Eq.~(5.6)]{PS}}]\label{theo:ctreesize0}
The size of the $C$-tree $\vert C_n\vert$ in a random \polya tree $T_n$ of size $n$ satisfies a central limit theorem where the expected value $\mathbb{E}\vert C_n\vert$ and the variance $\mathbb{V}\mbox{ar}\,\vert C_n\vert$ are asymptotically
\begin{align*}
\mathbb{E}\vert C_n\vert=\frac{2n}{b^2\rho}(1 + \CMcal{O}(n^{-1})),\quad\mbox{ and }\quad
\mathbb{V}\mbox{ar}\,\vert C_n\vert=\frac{11n}{12b^2\rho}
(1+\CMcal{O}(n^{-1})).
\end{align*}
Furthermore, for any $s$ such that $0<s<1/2$, with probability $1-o(1)$ we have
\begin{equation}\label{E:concen}
(1-n^{-s})\frac{2n}{b^2\rho}\le \vert C_n\vert \le (1+n^{-s})\frac{2n}{b^2\rho}.
\end{equation}
\end{theorem}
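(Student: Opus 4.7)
The plan is to extract both the central limit theorem and the moment asymptotics for $|C_n|$ by analysing a bivariate generating function, in the same spirit as the preceding proof of Theorem~\ref{T:1}. Marking $C$-tree nodes with a fresh variable $u$ gives, by~\eqref{E:bi1}, the closed form $T_c(z,u)=C(uzD(z))$, where the generating function $C$ of $C$-trees has dominant singularity at $e^{-1}$ with the classical square-root expansion $C(x) = 1 - \sqrt{2(1-ex)} + \tfrac{2}{3}(1-ex) + O\bigl((1-ex)^{3/2}\bigr)$. Since $T_c(z,u)$ is a super-critical composition, for $u$ in a small complex neighbourhood of $1$ the dominant singularity $\rho(u)$ of $z\mapsto T_c(z,u)$ is the unique solution of $u\rho(u)D(\rho(u))=e^{-1}$, and by the implicit function theorem $\rho(u)$ is analytic at $u=1$ with $\rho(1)=\rho$.

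Substituting $\rho(u)$ into the singular expansion of $C$ produces a uniform expansion of the form $T_c(z,u) = g(z,u) - h(u)\bigl(1 - z/\rho(u)\bigr)^{1/2} + O\bigl((1-z/\rho(u))^{3/2}\bigr)$ in a suitable domain, where $g,h$ are analytic at $(z,u)=(\rho,1)$. This is exactly the hypothesis of Gourdon's Theorems~3--4 (equivalently, Hwang's quasi-powers theorem), so singularity analysis on $[z^n]T_c(z,u)/[z^n]T(z)$ yields a probability generating function of the form $(\rho/\rho(u))^n\,G(u)\bigl(1+O(1/n)\bigr)$, from which the central limit theorem for $|C_n|$ follows immediately; the concentration~\eqref{E:concen} then falls out of Chebyshev's inequality exactly as in the proof of Theorem~\ref{T:1}.

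To verify the stated constants I would implicitly differentiate $u\rho(u)D(\rho(u))=e^{-1}$ to obtain $\rho'(1) = -\rho D(\rho)/(D(\rho)+\rho D'(\rho))$, and then match $T(z)=C(zD(z))$ against Otter's expansion~\eqref{eq:polyaasympt}: expanding $1-ezD(z) = e(\rho-z)(D(\rho)+\rho D'(\rho)) + O((\rho-z)^2)$ and comparing the leading $\sqrt{\rho-z}$ coefficient yields the key identity $b^2 = 2e(D(\rho)+\rho D'(\rho))$. Combined with $e\rho D(\rho)=1$, this collapses $\mathbb{E}|C_n| \sim -n\rho'(1)/\rho$ precisely to $2n/(b^2\rho)$, as claimed.

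The main obstacle is the variance constant $11/(12 b^2\rho)$: unlike the mean, it depends simultaneously on $\rho''(1)$ (from a second implicit differentiation of $u\rho(u)D(\rho(u))=e^{-1}$) and on the subleading constant-term coefficient $\tfrac{2}{3}$ in the singular expansion of $C$, and the two contributions must be added with the correct signs before using $b^2 = 2e(D(\rho)+\rho D'(\rho))$ to eliminate all derivatives of $D$ in favour of Otter's constants. This is a mechanical but arithmetic-heavy computation, best performed symbolically, and it is the only nonroutine step in the argument.
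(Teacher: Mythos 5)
Your overall architecture --- the super-critical composition $T_c(z,u)=C(uzD(z))$, the movable singularity $\rho(u)$ determined by $u\rho(u)D(\rho(u))=e^{-1}$, a quasi-powers statement, and Chebyshev for \eqref{E:concen} --- is essentially the route the paper takes; the paper merely packages the singularity-perturbation step into an appeal to Drmota's Theorem~2.23 for the implicit equation $y=zue^{y}D(z)$ and then reads off the moments by coefficient extraction from $T(z)/(1-T(z))$ and $T(z)(1-T(z))^{-3}$ rather than from derivatives of $\rho(u)$. Your mean computation is correct: $\rho'(1)=-\rho D(\rho)/(D(\rho)+\rho D'(\rho))$ together with $b^2=2e\left(D(\rho)+\rho D'(\rho)\right)$ (which is equivalent to the values of $D(\rho)$ and $D'(\rho)$ recorded in Lemma~\ref{lem:D(z)}) gives $-n\rho'(1)/\rho=2n/(b^2\rho)$, in agreement with the paper's $[z^n]\frac{T(z)}{1-T(z)}\big/[z^n]T(z)$.

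The gap is in the variance step, and it is not merely ``arithmetic-heavy'': the ingredients you list cannot determine the constant. In the quasi-powers framework the $\Theta(n)$ part of the variance is $n\bigl(\rho'(1)^2/\rho^2-\rho''(1)/\rho-\rho'(1)/\rho\bigr)$, and the subleading coefficient $\tfrac{2}{3}$ in the expansion of $C$ enters only the analytic prefactor in front of $(\rho/\rho(u))^n$, hence contributes $O(1)$ and is irrelevant for the leading term. Meanwhile $\rho''(1)$ requires $f''(\rho)$ for $f(z)=zD(z)$, i.e.\ $2D'(\rho)+\rho D''(\rho)$, and the identity $b^2=2e\left(D(\rho)+\rho D'(\rho)\right)$ says nothing about $D''(\rho)$. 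Matching the $(\rho-z)$-coefficients of $T(z)=C(zD(z))$ against Otter's expansion does not supply it either: that comparison only reproduces $c=b^2/3$ identically, because $f$ is analytic at $\rho$ and $f''(\rho)$ first shows up at order $(\rho-z)^{3/2}$ of $T$, a coefficient not listed in \eqref{eq:polyaasympt}. So the computation you describe terminates with an undetermined constant; to close it you need an additional input, namely the third-order term of Otter's expansion or, equivalently, $D''(\rho)$ (computable from $zD(z)=T(z)e^{-T(z)}$), which is a genuinely new numerical constant rather than a combination of $b$ and $\rho$ obtainable from the relations you invoke. Everything else (the CLT itself, the mean, and the concentration via Chebyshev) is sound and matches the paper's argument.
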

Random \polya trees belong to the class of random $\CMcal{R}$-enriched trees and we refer the readers to \cite{Ben2} for the proof of Theorem~\ref{theo:ctreesize0} in the general setting. Here we provide a proof of Theorem~\ref{theo:ctreesize0} to show the connection between a bivariate generating function and the normal distribution and to emphasize the simplifications for the concrete values of the expected value and variance in this case.

{\em Proof of Theorem~\ref{theo:ctreesize0} (see also \cite{Ben2} for a probabilistic proof)}. It follows from \cite[Th.~2.23]{Drmotabook} that the random variable $\vert C_n\vert$ satisfies a central limit theorem. In the present case, we set
$F(z,y,u)=zu\exp(y)D(z)$. It is easy to verify that $F(z,y,u)$ is an analytic function when $z$
and $y$ are near $0$ and that $F(0,y,u)\equiv 0$, $F(x,0,u)\not\equiv 0$ and all coefficients
$[z^ny^m]F(z,y,1)$ are real and nonnegative. From \cite[Th.~2.23]{Drmotabook} we know that
$T_c(z,u)$ is the unique solution of the functional identity $y=F(z,y,u)$. Since all coefficients
of $F_y(z,y,1)$ are nonnegative and the coefficients of $T(z)$ are positive as well as
monotonically increasing, this implies that $(\rho,T(\rho),1)$ is the unique solution of $F_y(z,y,1)=1$, which leads to the fact that $T(\rho)=1$. Moreover, the expected value is
\begin{align*}
\mathbb{E}\vert C_n\vert&=\frac{nF_u(z,y,u)}{\rho F_z(z,y,u)} \\
&=\frac{[z^n]\partial_u{T}_c(z,u)\vert_{u=1}}{[z^n]{T}(z)} \\
&=\left([z^n]\frac{{ T}(z)}{1-{T}(z)}\right)\left([z^n]{T}(z)\right)^{-1} \\
&=\frac{2n}{b^2\rho}(1 + \CMcal{O}(n^{-1})).
\end{align*}
The asymptotics are directly derived from~\eqref{eq:polyaasympt}. Likewise, we can compute the variance
\begin{align*}
\mathbb{V}\mbox{ar}\,\vert C_n\vert
&=\frac{[z^n]{T}(z)(1-{T}(z))^{-3}}{[z^n]{T}(z)}
-(\mathbb{E}\,\vert C_n\vert)^2
=\frac{11n}{12b^2\rho}
(1+\CMcal{O}(n^{-1})).
\end{align*}
Furthermore, $\vert C_n\vert$ is highly concentrated around $\mathbb{E}\,\vert C_n\vert$, which can be proved again by using Chebyshev's inequality. We set $\varepsilon_n=n^{-s}$ where $0<s<1/2$ and get
\begin{eqnarray*}
\mathbb{P}(\big\vert \vert C_n\vert-\mathbb{E}\vert C_n\vert\big\vert\ge \varepsilon_n\cdot\mathbb{E}\vert C_n\vert)
\le\frac{\mathbb{V}\mbox{ar}\vert C_n\vert}{\varepsilon_n^2\cdot(\mathbb{E}\vert C_n\vert)^2}
=\CMcal{O}(n^{2s-1})=o(1),
\end{eqnarray*}
which yields \eqref{E:concen}.
\qed

As a simple corollary, we also get the total size of all weighted $D$-forests in $T_n$. Let $\CMcal{D}_n$ denote the union of all $D$-forests in a random \polya tree $T_n$ of size $n$.
\begin{corollary}\label{C:1}
	The size of weighted $D$-forests in a random \polya tree of size $n$ satisfies a central limit theorem where the expected value $\mathbb{E}\vert\CMcal{D}_n\vert$ and the variance $\mathbb{V}\mbox{ar}\vert\CMcal{D}_n\vert$ are asymptotically
	\begin{align*}
	\mathbb{E}\vert\CMcal{D}_n\vert=n\left(1-\frac{2}{b^2\rho}\right)(1 + \CMcal{O}(n^{-1})),\quad\mbox{ and }\quad
	\mathbb{V}\mbox{ar}\vert\CMcal{D}_n\vert=\frac{11n}{12b^2\rho}
	(1+\CMcal{O}(n^{-1})).
	\end{align*}
\end{corollary}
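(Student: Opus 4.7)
The plan is to exploit the deterministic conservation of vertices in the decomposition of a \polya tree. First I would observe that, for any realization of $T_n$ together with its chosen automorphism, every vertex of $T_n$ lies either in the $C$-tree $C_n$ (as a fixed point of the automorphism) or in exactly one attached $D$-forest (as a non-fixed point); hence we have the pointwise identity
\[
\vert \CMcal{D}_n \vert = n - \vert C_n \vert
\]
on the entire probability space underlying $T_n$. This reduces the statement about $\vert\CMcal{D}_n\vert$ to a statement about $\vert C_n\vert$, which is precisely Theorem~\ref{theo:ctreesize0}.

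Given this identity, linearity of expectation yields $\E\vert\CMcal{D}_n\vert = n - \E\vert C_n\vert$; substituting the asymptotic expression for $\E\vert C_n\vert$ from Theorem~\ref{theo:ctreesize0} and factoring out $n$ gives the claimed formula. For the variance, translation by the constant $n$ preserves the variance, so
\[
\mathbb{V}\mbox{ar}\,\vert\CMcal{D}_n\vert = \mathbb{V}\mbox{ar}\,\vert C_n\vert = \frac{11n}{12b^2\rho}(1+\CMcal{O}(n^{-1})),
\]
as claimed. For the central limit theorem, I would note that the standardized variable satisfies
\[
\frac{\vert\CMcal{D}_n\vert - \E\vert\CMcal{D}_n\vert}{\sqrt{\mathbb{V}\mbox{ar}\,\vert\CMcal{D}_n\vert}} = -\,\frac{\vert C_n\vert - \E\vert C_n\vert}{\sqrt{\mathbb{V}\mbox{ar}\,\vert C_n\vert}},
\]
so it is just the negation of the standardized variable for $\vert C_n\vert$. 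Since the standard normal law is symmetric about zero, the CLT for $\vert C_n\vert$ established in Theorem~\ref{theo:ctreesize0} transfers verbatim to $\vert\CMcal{D}_n\vert$.

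There is no substantial obstacle: the whole corollary reduces to the trivial observation that $\vert\CMcal{D}_n\vert$ is an affine (in fact, reflected) image of $\vert C_n\vert$, combined with a direct quotation of Theorem~\ref{theo:ctreesize0}. The only point worth double checking is the conservation-of-vertices identity, but this is immediate from the probabilistic interpretation of the composition scheme~\eqref{eq:polyadeco} given in Section~\ref{basics}: once the automorphism $\sigma$ of $T_n$ is fixed, its fixed points form $C_n$ and the remaining vertices are partitioned among the $D$-forests attached to the nodes of $C_n$, so no vertex is lost or double-counted.
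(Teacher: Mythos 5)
Your proposal is correct and is exactly the argument the paper intends: the paper states the result as a ``simple corollary'' of Theorem~\ref{theo:ctreesize0} precisely because of the pointwise identity $\vert\CMcal{D}_n\vert = n - \vert C_n\vert$, from which the expectation, variance, and central limit theorem transfer as you describe.
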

Theorem~\ref{theo:ctreesize0} and Corollary~\ref{C:1} tell us that a random \polya tree $T_n$ consists mostly of a $C$-tree (proportion $\frac{2}{b^2\rho}$ comprising $\approx 82.2\%$ of the nodes) and to a small part of $D$-forests (proportion $1-\frac{2}{b^2\rho}$ comprising $ \approx 17.8\%$ of the nodes). Furthermore, the average size of a $D$-forest $F_n(v)$ attached to a random $C$-tree vertex in $T_n$ is $\frac{b^2\rho}{2}-1 \approx 0.216$, which indicates that on average the $D$-forest $F_n(v)$ is very small, although the maximal size of all $D$-forests in a random \polya tree $T_n$ reaches $\Theta(\log n)$.
\begin{remark}
Let us describe the connection of (\ref{eq:polyadeco}) to the Boltzmann sampler from \cite{PS}. We know that
$F(z,y,1)=z\Phi(y)D(z)$ where $\Phi(x)=\exp(x)$ and $y=T(z)$. By dividing both sides of this equation by $y=T(z)$, one obtains from~\eqref{eq:polyadecoA} that
\begin{align*}
1=\frac{z{D}(z)}{{T}(z)}\exp({T}(z))
=\exp(-{T}(z))\sum_{k\ge 0}\frac{{T}^k(z)}{k!},
\end{align*}
which implies that in the Boltzmann sampler $\Gamma T(x)$, the number of offspring contained in the $C$-tree $C_n$ is Poisson distributed with parameter $T(x)$. As an immediate result, the random $C$-tree contained in the \polya tree generated by the Boltzmann sampler $\Gamma T(\rho)$ corresponds to a critical Galton-Watson tree since the expected number of offspring is $F_y(z,y,1)=1$ for $(z,y)=(\rho,1)$.
\end{remark}
\section{\texorpdfstring{$D$-forests and $C$-trees}{D-forests and C-trees}}
\label{S:size-D}


In order to get a better understanding of $D$-forests and $C$-trees, we need to return to the original proof of \polya on the number of \polya trees \cite{poly37}. The important step is the treatment of tree automorphisms by the cycle index. 


As before, we denote by $\sigma_i$ the number of cycles of length $i$ of a permutation $\sigma$. Let $S_k$ be the symmetric group of order $k$. The {\em type} of a permutation $\sigma\in S_k$ is the $k$-tuple $(\sigma_1,\sigma_2,\ldots,\sigma_k)$. Note that $k=\sum_{i=1}^k i\sigma_i$.
\begin{definition}[Cycle index]\label{D:cin}
Let $G$ be a subgroup of the symmetric group $S_k$. Then the {\em cycle index} is
\begin{align*}
Z(G; s_1,s_2,\ldots,s_k)=\frac{1}{|G|}\sum_{\sigma\in G}s_1^{\sigma_1}s_2^{\sigma_2}\cdots s_k^{\sigma_k}.
\end{align*}
\end{definition}
Now we are ready to prove Theorem~\ref{T:2}.

\subsection{Proof of Theorem~\ref{T:2}} By P\'{o}lya's enumeration theory \cite{poly37}, 
the generating function $T(z)$ satisfies the functional equation
\begin{align*}\nonumber
T(z)&=z \sum_{k \ge 0} Z(S_k; T(z),T(z^2),\ldots,T(z^k))\\
&=z\sum_{k\ge 0}\frac{1}{k!}\sum_{\sigma\in S_k}(T(z))^{\sigma_1}(T(z^2))^{\sigma_2}\cdots (T(z^k))^{\sigma_k},
\end{align*}
which can be simplified to \eqref{E:penum1}, the starting point of our research, by a simple
calculation. However, this shows that the generating function of $D$-forests from~\eqref{eq:polyadecoA} is given by
\begin{align}
	D(z) &= \exp\left(\sum_{i=2}^\infty \frac{T(z^i)}{i}\right) \nonumber\\ 
				&= \sum_{k \geq 0} Z(S_k;0,T(z^2),\ldots,T(z^k)) 
			= \sum_{k \geq 0}\frac{1}{k!}\sum_{
			\sigma\in S_k \ \text{\scriptsize such that } \sigma_1=0}(T(z^2))^{\sigma_2}\cdots (T(z^k))^{\sigma_k}.  \label{Dforestgf}
\end{align}
The weight of a $D$-forest of size $n$ comprising $k$ trees is given by the ratio of fixed point free automorphisms over the total number of automorphisms.
This quotient equals the number of fixed point free permutations $\sigma \in S_k$ of the trees
which the forest consists of divided by the total number of orderings, $k!$, since the automorphisms of the subtrees of the root contribute to both the number of all and the number of fixed point free automorphisms of the forest. 
But this last quotient is precisely the coefficient of $z^n$ in the $k$th summand of \eqref{Dforestgf}. Thus 
\begin{align*}
	d_n = [z^n]D(z)=\sum_{\substack{F \in \operatorname{MSET}^{(\geq 2)}( \Tc )\\|F| = n}} \frac{|\{ \sigma \in \Aut(F) ~|~ \sigma_1 = 0\}|}{|\Aut(F)|}.
\end{align*} 
This proves the first assertion of Theorem~\ref{T:2}.

\begin{example}
The smallest $D$-forest is of size $2$, and it consists of a pair of single nodes, see Figure~\ref{fig:smalldforests}. This forest has only  
one fixed point free automorphism, thus $d_2 = 1/2$. For $n=3$ the forest
consists of three single nodes. The fixed point free permutations are the $3$-cycles, thus $d_3 =
2/6 = 1/3$. The case $n=4$ is more interesting. A forest consists either of four single nodes, or
of two identical trees, each consisting of two nodes and one edge. In the first case we have six
$4$-cycles  and three pairs of transpositions. In the second case we have one transposition swapping the two trees. Thus, $d_4 = \frac{6+3}{24} + \frac{1}{2} = \frac{7}{8}$. \hfill{$\diamondsuit$}
\end{example}


These results also yield a natural interpretation of $C$-trees. We recall that by definition
\begin{align*}
	T_c(z,u) &= \sum_{n \geq 0} t_{c,n}(u) z^n,
\end{align*}
where $t_{c,n}(u)=\sum_{k}c_{n,k}u^k$ is the polynomial marking the $C$-trees in \polya trees of size $n$. From the decompositions~\eqref{eq:polyadeco} and (\ref{E:bi1}) we get the first few terms:
\begin{align*}
	t_{c,1}(u) &= u, &
	t_{c,2}(u) &= u^2, &
	t_{c,3}(u) &= \frac{3}{2} u^3 + \frac{1}{2} u, &
	t_{c,4}(u) &= \frac{8}{3} u^4 + u^2 + \frac{1}{3} u %
	.
\end{align*}
Evaluating these polynomials at $u=1$ obviously returns $t_{c,n}(1) = t_n$, which is the number of \polya trees of size $n$.
Their coefficients, however, are weighted sums depending on the number of $C$-tree nodes. For a given \polya tree there are in general several ways to decide what is a $C$-tree node and what is a $D$-forest node. The possible choices are encoded in the automorphisms of the tree, and these are responsible for the above weights as well.

Let $T$ be a \polya tree and $\Aut(T)$ its automorphism group. For an automorphism $\sigma \in \Aut(T)$ the nodes which are fixed points of $\sigma$ are $C$-tree nodes. All other nodes are part of $D$-forests. Summing over all automorphisms and normalizing by the total number gives the $C$-tree generating polynomial for $T$:
\begin{align}
  \label{E:tTu}
	t_{T}(u) &= Z(\Aut(T); u, 1,\ldots,1) = \frac{1}{|\Aut(T)|} \sum_{\sigma \in \Aut(T)} u^{\sigma_1}.
\end{align}
The polynomial of $C$-trees in \polya trees of size $n$ is then given by
\begin{align*}
	t_{c,n}(u) &= \sum_{
	T \in \CMcal{T},\; \vert T\vert=n} t_{T}(u), 
\end{align*}
which completes the proof of the second assertion of Theorem~\ref{T:2}.

\begin{example}
	\label{ex:probinterpretation}
	For $n=3$ we have two \polya trees, namely the chain $T_1$, a chain made of three
vertices, and the cherry $T_2$, which is a root with two single vertices attached to it as
subtrees. Thus, $\Aut(T_1) = \{ \text{id} \}$, and $\Aut(T_2) = \{\text{id}, \sigma\}$, where $\sigma$ swaps the two leaves but the root is unchanged. Thus,
	\begin{align*}
		t_{T_1}(u) &= u^3, &
		t_{T_2}(u) & = \frac{1}{2} (u^3 + u).
	\end{align*}
	For $n=4$ we have the four \polya trees shown in Figure~\ref{F:2}. Their automorphism groups are given by $\Aut(T_1) = \Aut(T_2) = \{ \text{id} \}$, $\Aut(T_3) = \{ \text{id}, (v_3\, v_4) \} \cong S_2$, and
$$\Aut(T_4) = \{\text{id}, (v_2\,v_3), (v_3\,v_4), (v_2\,v_4), (v_2\, v_3\, v_4), (v_2\,v_4\,v_3) \} \cong S_3.$$
This gives
	\begin{align*}
		t_{T_1}(u) &= t_{T_2}(u) = u^4, &
		t_{T_3}(u) &= \frac{1}{2} (u^4+u^2), &
		t_{T_4}(u) &= \frac{1}{6} (u^4 + 3u^2 + 2u).
	\end{align*}
This enables us to give a probabilistic interpretation of the composition
scheme~\eqref{eq:polyadeco}. For a given tree the weight of $u^k$ is the probability that the
underlying $C$-tree is of size $k$. In particular, $T_1$ and $T_2$ do not have $D$-forests. The
tree $T_3$ consists of a $C$-tree with four or with two nodes, each case with probability $1/2$. In
the second case, as there is only one possibility for the $D$-forest, it consists of the pair of
single nodes which are the leaves. Finally, the tree $T_4$ has either four $C$-tree nodes with
probability $1/6$, two with probability $1/2$, or only one with probability $1/3$. These decompositions are shown in Figure~\ref{F:0}. \hfill{$\diamondsuit$}
\end{example}
\begin{figure}[htbp]
\begin{center}
\hspace{1.6cm}
\includegraphics[scale=1.2]{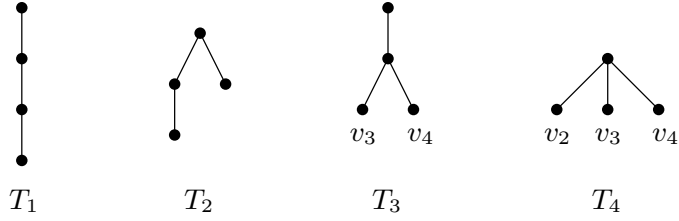}
\caption{All \polya trees of size $4$\label{F:2}.}
\end{center}
\end{figure}

In the same way as we got the composition scheme in \eqref{eq:polyadeco}, we can rewrite $T_c(z,u)$ from~\eqref{E:bi1} into $T_c(z,u)=C(uzD(z))$. 
The expected total weight of all $C$-trees contained in all \polya trees of size $n$ is the $n$-th coefficient of $T_c(z)$, which is
\begin{align}
	\label{E:expTc}
	T_c(z) := \left.\frac{\partial}{\partial u} T_c(z,u) \right|_{u=1} = \frac{T(z)}{1 - T(z)} = z + 2z^2 + 5z^3 + 13z^4 + 35 z^5 + 95 z^6 + \cdots.
\end{align}
Let us explain why these numbers are integers, although the coefficients of $t_{c,n}(u)$ are in general not. We will show an even stronger result. 

\pagebreak[2]

\begin{lemma}
	\label{L:markingTc}
	Let $T$ be a tree and $\CMcal{P}(T)$ be the set of all trees with one single pointed (or colored) node which can be generated from $T$. Then 
	for all $T \in \CMcal{T}$ we have $t_T'(1) = |\CMcal{P}(T)|$.
\end{lemma}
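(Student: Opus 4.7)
The plan is to recognize $t_T'(1)$ as the average number of fixed points of an automorphism of $T$ and then invoke the Cauchy--Frobenius--Burnside lemma.

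First I would differentiate the explicit formula \eqref{E:tTu} for $t_T(u)$ with respect to $u$ and evaluate at $u=1$. Since $t_T(u) = \frac{1}{|\Aut(T)|}\sum_{\sigma\in\Aut(T)} u^{\sigma_1}$, we obtain
\begin{align*}
t_T'(1) = \frac{1}{|\Aut(T)|} \sum_{\sigma \in \Aut(T)} \sigma_1,
\end{align*}
where $\sigma_1$ is exactly $|\mathrm{Fix}(\sigma)|$, the number of vertices of $T$ fixed by the automorphism $\sigma$. Thus $t_T'(1)$ is precisely the average number of fixed points of a uniformly random automorphism of $T$.

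Second I would interpret $|\mathcal{P}(T)|$ as the number of orbits of the natural action of $\Aut(T)$ on the vertex set of $T$. Indeed, two pointings of $T$ at vertices $v$ and $v'$ yield the same element of $\mathcal{P}(T)$ if and only if some $\sigma \in \Aut(T)$ satisfies $\sigma(v)=v'$, so $|\mathcal{P}(T)|$ equals the number of $\Aut(T)$-orbits on $V(T)$.

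Finally, applying the Cauchy--Frobenius--Burnside lemma to this action gives
\begin{align*}
|\mathcal{P}(T)| \;=\; \frac{1}{|\Aut(T)|} \sum_{\sigma \in \Aut(T)} |\mathrm{Fix}(\sigma)| \;=\; \frac{1}{|\Aut(T)|} \sum_{\sigma \in \Aut(T)} \sigma_1 \;=\; t_T'(1),
\end{align*}
which is the claim. There is essentially no obstacle here: the lemma is a one-line consequence of Burnside's lemma once one notices that $\sigma_1$ in the cycle index notation is literally the number of fixed points of $\sigma$. The only thing that might merit a sentence of justification is the identification of $|\mathcal{P}(T)|$ with the number of vertex orbits, which is immediate from the definition of isomorphism of pointed (rooted-and-colored) trees.
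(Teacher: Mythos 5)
Your proof is correct, and it takes a genuinely different route from the paper. The paper proves the identity by induction on the size of $T$: it decomposes the tree at the root into isomorphism classes of root subtrees, observes that a vertex can be a fixed point only if the root of its subtree is, and uses linearity of expectation together with the fact that a uniform random permutation of $S_k$ has exactly one fixed point on average; the base case and the recursive step then assemble into the claim. You instead observe that $|\CMcal{P}(T)|$ is the number of orbits of the natural action of $\Aut(T)$ on $V(T)$ (two pointings give the same pointed tree precisely when an automorphism carries one marked vertex to the other), and then the Cauchy--Frobenius--Burnside lemma immediately identifies the orbit count with the average number of fixed points, which is $t_T'(1)$ by differentiating \eqref{E:tTu}. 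Your argument is shorter and more conceptual, and it is essentially the content of the paper's own remark following the lemma, which notes that the result also follows from counting cycle-pointed structures whose distinguished cycle has length $1$. What the paper's inductive proof buys in exchange for its length is an explicit view of how the fixed points distribute over the recursive root decomposition, which ties in with the sequence-of-trees interpretation of $T(z)/(1-T(z))$ given just after the lemma; your proof does not provide that structural picture, but as a proof of the stated identity it is complete and has no gaps.
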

\begin{proof}
From~\eqref{E:tTu} we get 
$
	t_T'(1) = \sum_{\sigma \in \Aut(T)} \frac{\sigma_1}{\vert\Aut (T)\vert}
$
is the expected number of fixed points in a uniformly at random chosen automorphism of $T$. The associated random variable $C_T$ is defined in~\eqref{eq:probCT}.
We will prove $\E(C_T)=|\CMcal{P}(T)|$ by induction on the size of $T$.

The most important observation is that only if the root of a subtree is a fixed point, its children can also be fixed points. Obviously, the root of the tree is always a fixed point.

For $|T| = 1$, the claim holds as $\E(C_T)=1$ and there is just one tree with a single node and a marker on it. For larger $T$ consider the construction of \polya trees. A \polya tree consists of a root $T_0$ and its children, which are a multiset of smaller trees. Thus, the set of children is of the form
\begin{align*}
	\{T_{1,1},\ldots,T_{1,k_1},T_{2,1},\ldots,T_{2,k_2},\ldots,T_{r,1},\ldots,T_{r,k_r} \}, \quad \text{ with } \quad T_{i,j} \in \CMcal{T},
\end{align*}
and where trees with the same first index are isomorphic.
On the level of children, the possible behaviors of automorphisms are permutations within the same class of trees. In other words, an automorphism may interchange the trees $T_{1,1},\ldots,T_{1,k_1}$ in $k_1!$ many ways, etc. Here the main observation comes into play: only subtrees of which the root is a fixed point might also have other fixed points. Thus, the expected number of fixed points is given by the expected number of fixed points in a random permutation of $S_{k_i}$ times the expected number of fixed points in $T_{k_i}$. By linearity of expectation we get
\begin{align*}
	\E(C_T) = \E(C_{T_0})+\sum_{i = 0}^r \underbrace{\E(\text{number of fixed points in }S_{k_i})}_{=1} \E(C_{T_i}),
\end{align*}
where $\E(C_{T_i})=\E(C_{T_{i,j}})$ for all $1\le j\le k_i$ and $\E(C_{T_0})=1$ because the root is a fixed point of any automorphism. Since the expected number of fixed points for each permutation is $1$, we get on average $1$ representative for each class of trees. This is exactly the operation of labeling one tree among each equivalence class.
Finally, by induction the claim holds.
%
%
\end{proof}
This completes the proof of Theorem~\ref{T:2}.
\qed

\medskip
As an immediate consequence of Lemma~\ref{L:markingTc}, $t_{c,n}'(1)$ counts the number of \polya trees with $n$ nodes and a single labeled node (see OEIS~A$000107$, \cite{Sloane}).
This also explains the construction of non-empty sequences of trees in~\eqref{E:expTc}: Following the connection of \cite[pp. 61--62]{Bergeronbook} one can draw a path from the root to each labeled node. The nodes on that path are the roots of a sequence of \polya trees.

\begin{remark}
	Note that Lemma~\ref{L:markingTc} also implies that the total number of fixed points in all automorphisms of a tree is a multiple of the number of automorphisms.
\end{remark}
\begin{remark}
Lemma~\ref{L:markingTc} can also be proved by considering cycle-pointed \polya trees; see \cite[Section~3.2]{Kang} for a full description. Let $(T,c)$ be a cycle-pointed structure considered up to symmetry where $T$ is a \polya tree and $c$ is a cycle of an automorphism $\sigma\in \Aut(T)$. Then, the number of such cycle-pointed structures $(T,c)$ where $c$ has length $1$ is exactly the number $t_T'(1)$.

\end{remark}
Let us analyze the $D$-forests in $T_n$ more carefully. We want to count the number of $D$-forests
that have size $m$ in a random \polya tree $T_n$. Therefore, we label such $D$-forests with an additional parameter $u$ in \eqref{eq:polyadeco}. From the bivariate generating function (\ref{E:bi2}) we can recover the probability $\mathbb{P}[\vert F_n(v)\vert=m]$ to generate a $D$-forest of size $m$ in the Boltzmann sampler from \cite{PS}.

\subsection{Proof of Theorem~\ref{T:312}}
	The first result is a direct consequence of \eqref{eq:polyadeco}, where only vertices with weighted $D$-forests of size $m$ are marked. For the second result we differentiate both sides of (\ref{E:bi2}) and get
	\begin{align*}
		T^{[m]}_u(z,1) &= \frac{T(z)}{1-T(z)} \frac{d_m z^m}{D(z)} = T_c(z) \frac{d_m z^m}{D(z)}.
	\end{align*}
	Then, the sought probability is given by
	\begin{align*}
		\PR\left[ \vert F_n(v)\vert = m \right] = \frac{ [z^n] T^{[m]}_u(z,1)}{ [z^n] T_c(z)} =  \frac{d_m \rho^m}{D(\rho)} \left( 1 + \LandauO\left(n^{-1}\right)\right).
	\end{align*}
	For the last equality we used the fact that $D(z)$ is analytic in a neighborhood of $z=\rho$.

Let $P_n(u)$ be the probability generating function for the size of a weighted $D$-forest $F_n(v)$ attached to a vertex $v$ of $C_n$ in a random \polya tree $T_n$. From the previous theorem it follows that
\begin{align*}
	P_n(u) &= \sum_{m \geq 0} \frac{ [z^n] T^{[m]}_u(z,1)}{ [z^n] T_c(z)} u^m
	        = \frac{ [z^n] T_c(z) \frac{D(zu)}{D(z)}}{ [z^n] T_c(z)} = \frac{D(\rho u)}{D(\rho)} \left( 1 + \LandauO\left(n^{-1}\right)\right).
\end{align*}
This is exactly \cite[Eq.~(5.2)]{PS}.
\qed

\medskip
Summarizing, we state in Table~\ref{tab:decoratedcnodes} the asymptotic probabilities that a weighted $D$-forest $F_n(v)$ in $T_n$ has size equal to or greater than $m$.
\begin{table}[htbp]
	\begin{center}
	\begin{tabular}{|c||c|c|c|c|c|c|c|c|c|}
        \hline $m$ & $0$ & $1$ & $2$ & $3$ & $4$ & $5$ & $6$ & $7$ \\
        \hline $\PR[ \vert F_n(v)\vert = m] \approx$ & $0.9197$ & $0.0000$ & $0.0526$ & $0.0119$ & $0.0105$ & $0.0015$ & $0.0027$ & $0.0003$\\
        \hline $\PR[ \vert F_n(v)\vert \ge m] \approx$ & $1.0000$ & $0.0803$ & $0.0803$ & $0.0277$ & $0.0161$ & $0.0060$ & $0.0041$ & $0.0014$\\
		\hline
	\end{tabular}
	\end{center}
	\caption{The probability that a weighted $D$-forest $F_n(v)$ has size equal to or greater than $m$ when $0\le m\le 7$.}
	\label{tab:decoratedcnodes}
\end{table}

As most of the vertices in $C_n$ have an empty $D$-forest, it is also interesting to condition on the non-empty ones only. Its generating function is given by
\begin{align*}
	Q_n(u) &= \sum_{n \geq 2} \PR[ \vert F_n(v) \vert = m~\big|~\vert F_n(v) \vert > 0] u^m = \frac{D(\rho u) -1}{D(\rho) -1}\left( 1 + \LandauO\left(n^{-1}\right)\right).
\end{align*}
Its first values are listed in Table~\ref{tab:decoratedcnodes2}. It is interesting to see in these tables that the sequence of probabilities is not decreasing in $m$. Additionally, we deduce that more than $80\%$ of the used $D$-forests are $D$-forests of size $2$ and $3$. These are two or three copies of a single node.
\begin{table}[htbp]
	\begin{center}
	\begin{tabular}{|c||c|c|c|c|c|c|c|c|c|}
        \hline $m$ & $2$ & $3$ & $4$ & $5$ & $6$ & $7$ & $8$ & $9$ \\
        \hline $\PR[ \vert F_n(v) \vert = m~\big|~\vert F_n(v) \vert > 0] \approx$ & $0.656$ & $0.148$ & $0.131$ & $0.019$ & $0.034$ & $0.003$ & 0.007 & 0.001 \\
		\hline
	\end{tabular}
	\end{center}
	\caption{The probability that a weighted $D$-forest $F_n(v)$ has size equal to or greater than $m$ when $0\le m\le 7$.}
	\label{tab:decoratedcnodes2}
\end{table}

\subsection{Properties of $D$-forests}
Let us start with a short analysis of $D(z)$.
\begin{lemma}
	\label{lem:D(z)}
	The generating function $D(z)$ of $D$-forests has radius of convergence $\sqrt{\rho}$. It has two dominant singularities at $z=\pm \sqrt{\rho}$. Let $\xi(z) = e^{\frac{T(z^3)}{3} + \frac{T(z^4)}{4} + \cdots}$, which is analytic for $|z|<\rho^{1/3}$. Then,
	\begin{align}
		\label{eq:dnasympt}
		d_n =  \left(\xi(\sqrt{\rho}) + (-1)^n \xi(-\sqrt{\rho})\right) b\sqrt{\frac{\rho e}{8\pi}} \frac{\rho^{-n/2}}{\sqrt{n^3}} \left(1 + \LandauO\left(\frac{1}{n}\right)\right).
	\end{align}
	Furthermore, we have $D(\rho) = \frac{1}{e\rho}$ and $D'(\rho) = \frac{1}{e\rho^2}\left(\frac{b^2\rho}{2}-1\right)$.
\end{lemma}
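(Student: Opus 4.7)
The plan is to factor $D(z)=\exp(T(z^2)/2)\,\xi(z)$, read off the singularity structure from this factorization, apply singularity analysis at each of the two dominant singularities, and finally extract $D(\rho)$ and $D'(\rho)$ directly from the functional relation~\eqref{eq:polyadecoA}.

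First, I would check that $\xi(z)=\exp\sum_{i\ge 3}T(z^i)/i$ is analytic in $|z|<\rho^{1/3}$. For $|z|<\rho^{1/3}$ one has $|z^i|<\rho$ for every $i\ge 3$, and since $T(w)=O(|w|)$ near $0$, the tail $\sum_{i\ge 3}T(z^i)/i$ is eventually dominated by a geometric series and converges uniformly on compact subsets. Because $\rho<1$ we have $\sqrt\rho<\rho^{1/3}$, so $\xi$ is in particular analytic at $\pm\sqrt\rho$. Hence the only singularities of $D$ on the circle $|z|=\sqrt\rho$ come from $T(z^2)$ and lie exactly at $z^2=\rho$, that is, at $z=\pm\sqrt\rho$, which establishes the first two assertions.

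Next, I would substitute the singular expansion~\eqref{eq:polyaasympt} into $T(z^2)$. Using $\rho-z^2=(\sqrt\rho-z)(\sqrt\rho+z)$, one obtains near $z=\sqrt\rho$ that $(\rho-z^2)^{1/2}=\sqrt{2\rho}\,(1-z/\sqrt\rho)^{1/2}(1+O(1-z/\sqrt\rho))$, and therefore
\[
D(z) = e^{1/2}\xi(\sqrt\rho)\Bigl(1-\tfrac{b\sqrt{2\rho}}{2}\bigl(1-z/\sqrt\rho\bigr)^{1/2}\Bigr)+O\bigl(1-z/\sqrt\rho\bigr),
\]
with a completely analogous expansion at $-\sqrt\rho$ in the variable $(1+z/\sqrt\rho)^{1/2}$ and with $\xi(\sqrt\rho)$ replaced by $\xi(-\sqrt\rho)$. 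The $\Delta$-analyticity of $T$ around $\rho$ is classical for \polya-tree equations (see \cite[Chapter~VII.5]{FS}) and carries over to $D$ at both dominant singularities, so the transfer theorem applies. Using $[z^n](1-z/\sqrt\rho)^{1/2}\sim -\rho^{-n/2}/(2\sqrt\pi\,n^{3/2})$ and the corresponding $(-1)^{n+1}$-twisted estimate at $-\sqrt\rho$, summing the two contributions, and simplifying via $b\sqrt{2\rho e}/(4\sqrt\pi)=b\sqrt{\rho e/(8\pi)}$ yields~\eqref{eq:dnasympt}.

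Finally, for the values at $\rho$, I would rewrite \eqref{eq:polyadecoA} as $D(z)=T(z)/(z\,e^{T(z)})$. Substituting $T(\rho)=1$ gives $D(\rho)=1/(e\rho)$ at once. Differentiating,
\[
D'(z)=\frac{1}{z\,e^{T(z)}}\Bigl[T'(z)\bigl(1-T(z)\bigr)-\frac{T(z)}{z}\Bigr].
\]
The subtle point is that $T'(\rho)=\infty$ and $1-T(\rho)=0$, so the product $T'(z)(1-T(z))$ is indeterminate at $z=\rho$; however \eqref{eq:polyaasympt} yields $T'(z)\sim(b/2)(\rho-z)^{-1/2}$ and $1-T(z)\sim b(\rho-z)^{1/2}$, whose product has the finite limit $b^2/2$. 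Substituting this limit together with $T(\rho)=1$ gives $D'(\rho)=\frac{1}{e\rho^2}\bigl(b^2\rho/2-1\bigr)$. The only genuine obstacle in the whole argument is controlling this cancellation of singular terms in $T'(1-T)$; everything else is routine once the factorization $D(z)=\exp(T(z^2)/2)\,\xi(z)$ has been isolated.
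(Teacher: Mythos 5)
Your proposal is correct and follows essentially the same route as the paper: factor $D(z)=e^{T(z^2)/2}\xi(z)$, locate the two dominant singularities at $\pm\sqrt{\rho}$ via the singularity of $T$ at $\rho$, transfer the resulting square-root expansions, and read off $D(\rho)$, $D'(\rho)$ from the functional equation \eqref{eq:polyadecoA} using $T(\rho)=1$ and the cancellation $T'(z)(1-T(z))\to b^2/2$. Your write-up is in fact more detailed than the paper's own proof, and the constants check out.
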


\begin{proof}
	The key essence to this result is the elementarily checked fact that if $T(z)$ has radius of convergence $\rho$, then $T(z^2)$ will have radius of convergence $\sqrt{\rho}$. Therefore, $\pm \sqrt{\rho}$ are the dominant singularities, as $T(z)$ has a unique singularity at $z=\rho$.
	
	The asymptotic expansions are then derived from \eqref{eq:polyaasympt} as
	\begin{align*}
		T(z^2) &= 1 - b\sqrt{2 \rho} \left(1 \mp \frac{z}{\sqrt{\rho}}\right)^{1/2} + \LandauO\left(1 \mp \frac{z}{\sqrt{\rho}}\right), & \text{ for } z \to \pm \sqrt{\rho}.		
	\end{align*}
	Next, note that $\xi(z)$ is analytic for $|z|<\rho^{1/3}$ due to the same reasoning as above. Thus, the asymptotic expansion of $D(z) = e^{\frac{T(z^2)}{2}}\xi(z)$ is derived by combining the contributions on the two dominant singularities. 	
	
	Finally, the values for $D(\rho)$ and $D'(\rho)$ are derived from \eqref{eq:polyadeco}.
\end{proof}

We want to determine the number of trees in a given $D$-forest. Let $d_{n,k}$ be the weight of $D$-forests of size $n$ consisting of $k$ trees. Then, by~\eqref{eq:polyadecoA} the bivariate generating function satisfies
\begin{align*}
	D(z,v) &= \sum_{n,k \geq 0} d_{n,k} z^n v^k = \exp\left(\sum_{i=2}^{\infty} v^i \frac{T(z^i)}{i}\right).
\end{align*}

\begin{theorem}
Let $X_{n}$ be the random variable for the number of trees in a $D$-forest of size $n$, {\em i.e.},~$\PR[X_n = k] := \frac{d_{n,k}}{d_n}$. Then we have
\begin{align*}
	\E X_n &=
		\begin{cases}
			3 + \mu_0 + \LandauO\left(n^{-1}\right) \approx 3.2715 + \LandauO\left(n^{-1}\right), & \text{ for $n$ even},\\
			3 + \mu_1 + \LandauO\left(n^{-1}\right) \approx 6.7852 + \LandauO\left(n^{-1}\right), & \text{ for $n$ odd},\\
		\end{cases}
\end{align*}
with
\begin{align*}
	\mu_0 &:= \frac{\xi(\sqrt{\rho}) \gamma(\sqrt{\rho}) + \xi(-\sqrt{\rho}) \gamma(-\sqrt{\rho})}{\xi(\sqrt{\rho}) + \xi(-\sqrt{\rho}) }, &
	\mu_1 &:= \frac{\xi(\sqrt{\rho}) \gamma(\sqrt{\rho}) - \xi(-\sqrt{\rho}) \gamma(-\sqrt{\rho})}{\xi(\sqrt{\rho}) - \xi(-\sqrt{\rho}) }, \\
	\xi(z) &= \exp\Big( \sum_{i \geq 3} \frac{T(z^i)}{i} \Big), &
	\gamma(z) &= \sum_{i \geq 2} T(z^i).
\end{align*}
\end{theorem}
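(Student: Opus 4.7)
The plan is to extract $\E X_n$ by singularity analysis of the ratio
\[
\E X_n = \frac{[z^n]\partial_v D(z,v)\big|_{v=1}}{[z^n] D(z)}.
\]
Direct differentiation of $D(z,v) = \exp\bigl(\sum_{i\ge 2} v^i T(z^i)/i\bigr)$ in $v$ and evaluation at $v=1$ gives the clean identity $\partial_v D(z,v)|_{v=1} = D(z)\gamma(z)$, so the whole computation reduces to singularity analysis of the product $D(z)\gamma(z)$ near the two symmetric dominant singularities $z = \pm\sqrt{\rho}$ that Lemma~\ref{lem:D(z)} already identified for $D(z)$.

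My first concrete step is to split $\gamma(z) = T(z^2) + \tilde\gamma(z)$ with $\tilde\gamma(z) = \sum_{i \ge 3} T(z^i)$, which (just like $\xi(z)$ in Lemma~\ref{lem:D(z)}) is analytic in the strictly larger disk $|z| < \rho^{1/3}$. Hence both $D(z) = e^{T(z^2)/2}\xi(z)$ and $\gamma(z)$ inherit their square-root singularities at $\pm\sqrt{\rho}$ purely from the factor $T(z^2)$. Reusing the local expansion $T(z^2) = 1 - b\sqrt{2\rho}\,Z^{1/2} + \LandauO(Z)$ with $Z = 1 - z/\sqrt{\rho}$ from the proof of Lemma~\ref{lem:D(z)}, I read off
\begin{align*}
D(z) &= D(\sqrt{\rho}) \bigl[1 - \tfrac{1}{2} b\sqrt{2\rho}\, Z^{1/2} + \LandauO(Z)\bigr], \\
\gamma(z) &= \gamma(\sqrt{\rho}) - b\sqrt{2\rho}\, Z^{1/2} + \LandauO(Z),
\end{align*}
and then multiply, collecting both $Z^{1/2}$-contributions, to obtain
\[
D(z)\gamma(z) = D(\sqrt{\rho})\gamma(\sqrt{\rho}) - D(\sqrt{\rho}) \tfrac{b\sqrt{2\rho}}{2} [\gamma(\sqrt{\rho}) + 2]\, Z^{1/2} + \LandauO(Z).
\]
A symmetric expansion at $-\sqrt{\rho}$ with $Z' = 1 + z/\sqrt{\rho}$ gives the same shape after the obvious substitutions.

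Next I invoke the Flajolet--Odlyzko transfer theorem on a joint $\Delta$-domain around both $\pm\sqrt{\rho}$ --- admissible because $\xi$ and $\tilde\gamma$ extend analytically past them --- and use $[z^n](1 + z/\sqrt{\rho})^{1/2} = (-1)^n[z^n](1 - z/\sqrt{\rho})^{1/2}$ to get
\[
[z^n]\bigl(D(z)\gamma(z)\bigr) = \bigl(\xi(\sqrt{\rho})[\gamma(\sqrt{\rho})+2] + (-1)^n \xi(-\sqrt{\rho})[\gamma(-\sqrt{\rho})+2]\bigr)\cdot \Pi_n,
\]
where $\Pi_n = b\sqrt{\rho e/(8\pi)}\,\rho^{-n/2}/\sqrt{n^3}\,(1 + \LandauO(1/n))$ is \emph{exactly} the common prefactor that appears for $d_n$ in Lemma~\ref{lem:D(z)}. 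Dividing by $d_n$ cancels $\Pi_n$ and the amplitude $b\sqrt{2\rho}/2$ entirely, giving
\[
\E X_n = \frac{\xi(\sqrt{\rho})[\gamma(\sqrt{\rho})+2] + (-1)^n \xi(-\sqrt{\rho})[\gamma(-\sqrt{\rho})+2]}{\xi(\sqrt{\rho}) + (-1)^n \xi(-\sqrt{\rho})} + \LandauO(n^{-1}).
\]
Splitting into cases by the parity of $n$ and using $T(\rho) = 1$ to rewrite $\gamma(\pm\sqrt{\rho}) + 2 = 3 + \tilde\gamma(\pm\sqrt{\rho})$ then yields the two announced asymptotic constants $3 + \mu_0$ (for $n$ even) and $3 + \mu_1$ (for $n$ odd), with the weighted averages $\mu_0,\mu_1$ as in the statement.

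The only substantive obstacle is the double bookkeeping across the two symmetric singularities, and in particular remembering that $\gamma(z)$ is \emph{not} analytic at $\pm\sqrt{\rho}$: its $T(z^2)$-summand carries the same square-root singularity as $D(z)$ does, and this contribution \emph{adds} to the singular amplitude of $D(z)\gamma(z)$ through the usual Leibniz rule for singular expansions, producing the additive shift ``$+2$'' inside $[\gamma(\sqrt{\rho})+2]$ that, after using $T(\rho)=1$, is responsible for the integer constant $3$ in the final answer. Apart from that careful accounting, the argument is a routine transfer-theorem extraction.
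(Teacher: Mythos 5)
Your derivation is correct and follows essentially the same route as the paper: reduce $\E X_n$ to $[z^n]\bigl(D(z)\gamma(z)\bigr)/[z^n]D(z)$, isolate the singular summand $T(z^2)$ of $\gamma$, and transfer the square-root expansions at the two dominant singularities $\pm\sqrt{\rho}$, with all amplitudes bookkept correctly. One caveat: what your computation actually places inside the weighted averages is $\tilde\gamma(\pm\sqrt{\rho})=\sum_{i\ge 3}T\bigl((\pm\sqrt{\rho})^i\bigr)=\gamma(\pm\sqrt{\rho})-1$, not the statement's $\gamma(\pm\sqrt{\rho})$, so your closing claim that this matches ``$\mu_0,\mu_1$ as in the statement'' is not literally accurate. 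This is in fact an off-by-one slip in the theorem as printed rather than in your argument: the paper's own proof treats $\gamma$ as analytic at $\pm\sqrt{\rho}$ (true only for the tail $\sum_{i\ge 3}$), and the quoted numerical values $3.2715$ and $6.7852$ are the ones obtained with $\sum_{i\ge 3}T(z^i)$, whereas the displayed formulas with $\gamma=\sum_{i\ge 2}T(z^i)$ would give approximately $4.27$ and $7.79$; your asymptotics are the correct ones.
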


\begin{proof}
	The correspondence with generating functions gives
	\begin{align*}
		\E X_n &= \frac{[z^n] D_v(z,1)}{[z^n] D(z)}
		        = \frac{[z^n] D(z) \sum_{i \geq 2} T(z^i)}{[z^n] D(z)}.
	\end{align*}
	As $T(z)$ has a unique singularity at $\rho$, $T(z^k)$ is singular at $\omega^k
\rho^{1/k}$ where $\omega = \exp(2\pi i/k)$ is a $k$-th root of unity. By linearity of the
coefficient extraction operator all that remains is to consider $D(z)T(z^2)$. By \eqref{eq:dnasympt} we get
	\begin{align*}
		D(z) T(z^2) &= -3b\sqrt{\frac{e \rho}{2}} \left(1 \mp \frac{z}{\sqrt{\rho}}\right)^{1/2} + \LandauO\left(1 \mp \frac{z}{\sqrt{\rho}}\right), & \text{ for } z \to \pm \sqrt{\rho}.	
	\end{align*}
	Therefore, by standard techniques of singularity analysis~\cite{FS} we get
	\begin{align*}
		\frac{[z^n] D(z) T(z^2)}{[z^n]D(z)} &= 3 + \LandauO\left(n^{-1}\right).
	\end{align*}
	The fluctuating constant $\mu_n$ arises from the second part $\frac{[z^n] D(z)
\gamma(z)}{[z^n] D(z)}$. Due to the reasoning above $\gamma(z)$ is analytic for $|z|<\rho^{1/3}$.
Thus, we can again use \eqref{eq:dnasympt} to combine the singular expansions of $D(z)$ at $\pm \sqrt{\rho}$ with the analytic expansion of $\gamma(z)$.
	
	For the computations of the approximate values we used Maple.
\end{proof}

As a next step we investigate the number of trees of a random $D$-forest in a random \polya tree of size $n$. Let $t_{n,k}$ be the weight of \polya trees of size $n$ with having $k$ trees in their $D$-forests. Then, by~\eqref{eq:polyadeco} the bivariate generating function satisfies
\begin{align*}
	T(z,v) &= \sum_{n,k \geq 0} t_{n,k} z^n v^k = C(z D(z,v)) = z D(z,v) e^{T(z,v)}.
\end{align*}

\begin{theorem}\label{theo:dtreenumber}
The total number $Y_n$ of trees of all $D$-forests in a random \polya tree $T_n$ of size $n$ satisfies a central limit theorem where the expected value $\mathbb{E} Y_n$ and the variance $\mathbb{V}\mbox{ar}\,Y_n$ are asymptotically
\begin{align*}
\mathbb{E}Y_n=\frac{2\gamma(\rho)}{b^2 \rho} n (1 + \CMcal{O}(n^{-1})),\quad\mbox{ and }\quad
\mathbb{V}\mbox{ar}\,Y_n=\sigma^2 n
(1+\CMcal{O}(n^{-1})),
\end{align*}
with $\sigma^2 = \frac{2}{b^2\rho} \left(\frac{(2b^3 \rho + 72 d \rho + 18 b) \gamma(\rho)^2}{9 b^3 \rho} - \frac{4\gamma'(\rho)\gamma(\rho)}{b^2} + \gamma_2(\rho)\right) \approx 0.26718$, and $\gamma_2(z) = \sum_{i \geq 2} i T(z^i)$.
Furthermore, for any $s$ such that $0<s<1/2$, with probability $1-o(1)$ we have
\begin{eqnarray}\label{dtreenumber:concen}
(1-n^{-s})\frac{2\gamma(\rho)}{b^2\rho}n \le Y_n \le (1+n^{-s})\frac{2\gamma(\rho)}{b^2\rho}n.
\end{eqnarray}
\end{theorem}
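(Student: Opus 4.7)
The plan is to apply Drmota's Theorem 2.23 on bivariate functional equations to the generating function $T(z,v) = zD(z,v)\exp(T(z,v))$, paralleling the proof of Theorem~\ref{theo:ctreesize0}. Setting $F(z,y,v) = zD(z,v)e^y$, I would first verify the hypotheses: by the argument of Lemma~\ref{lem:D(z)}, $D(z,v)$ has radius of convergence $\sqrt{\rho} > \rho$ for each fixed $v$ in a neighborhood of $1$, so $F$ is analytic at the origin in all three variables; $F(0,y,v)\equiv 0$; the coefficients $[z^n y^m]F(z,y,1)$ are nonnegative; and the characteristic system $y = F(z,y,1)$, $F_y(z,y,1) = 1$ admits the unique positive solution $(z_0,y_0) = (\rho,1)$ because $T(\rho) = 1$. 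Aperiodicity is immediate since \polya trees of every size $n\geq 1$ have positive weight.

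For the mean, implicit differentiation of $T = zDe^T$ with respect to $v$ gives $T_v(1-T) = TD_v/D$, hence
\begin{align*}
T_v(z,1) &= \frac{T(z)\,\gamma(z)}{1-T(z)},
\end{align*}
with $\gamma(z) = D_v(z,1)/D(z) = \sum_{i\geq 2} T(z^i)$. Since $\gamma(z)$ is analytic at $z=\rho$, the expansion $1-T(z) \sim b\sqrt{\rho-z}$ yields $T_v(z,1) \sim \gamma(\rho)/(b\sqrt{\rho-z})$, and standard singularity analysis combined with the asymptotics of $t_n$ from \eqref{eq:polyaasympt} gives $\mathbb{E}Y_n = [z^n]T_v(z,1)/[z^n]T(z) \sim \frac{2\gamma(\rho)}{b^2\rho}\,n$.

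Differentiating $T_v(1-T) = TD_v/D$ once more in $v$ I obtain at $v=1$
\begin{align*}
(1-T)\,T_{vv} &= T_v^2 + T_v\,\gamma + T\,\gamma_2,
\end{align*}
where $\gamma_2(z) = \sum_{i\geq 2} i\,T(z^i)$. The leading singular term of $T_{vv}(z,1)$ is of order $(\rho-z)^{-3/2}$, which produces the $(\mathbb{E}Y_n)^2$ contribution, and the subdominant corrections in the Puiseux expansion of $T(z)$ (the coefficient $c=b^2/3$ and the next coefficient $d$ in front of $(\rho-z)^{3/2}$) combine with $T_v^2$, $T_v\,\gamma$ and $T\,\gamma_2$ to produce the $O(n)$ residual term giving the stated expression for $\sigma^2$. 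Once $\sigma^2>0$ is verified numerically, the variability condition of Drmota's theorem holds and the central limit theorem follows. The concentration bound \eqref{dtreenumber:concen} is then a direct application of Chebyshev's inequality with $\varepsilon_n = n^{-s}$, exactly as in the proof of Theorem~\ref{theo:ctreesize0}:
\begin{align*}
\PR\bigl(|Y_n - \mathbb{E}Y_n| \geq \varepsilon_n\, \mathbb{E}Y_n\bigr) \leq \frac{\mathbb{V}\mathrm{ar}\,Y_n}{\varepsilon_n^2\,(\mathbb{E}Y_n)^2} = \CMcal{O}(n^{2s-1}) = o(1).
\end{align*}

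The main obstacle is the explicit computation of $\sigma^2$: the cancellation between the leading $(\rho-z)^{-3/2}$ singularities of $T_{vv}$ and $T_v^2$ after dividing by $[z^n]T(z)$ must be tracked carefully, and the residual $O(n)$ term requires retaining higher-order coefficients from the singular expansion of $T(z)$ around $\rho$ (in particular the coefficients $c$ and $d$) as well as the first-order Taylor data of $\gamma$ at $\rho$ that produces the $\gamma'(\rho)\gamma(\rho)$ term in the final formula.
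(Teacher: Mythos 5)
Your proposal follows the paper's proof exactly: the paper likewise sets $F(z,y,v)=z e^y D(z,v)$, invokes Drmota's Theorem~2.23 for the central limit theorem, and notes that the remaining computations and the Chebyshev concentration argument proceed as in Theorem~\ref{theo:ctreesize0}, so your write-up is in fact more detailed than the published one. One minor algebraic point to keep straight when assembling $\sigma^2$: differentiating $T_v(1-T)=T D_v/D$ once more in $v$ yields $(1-T)T_{vv}=T_v^2+T_v\gamma+T(\gamma_2-\gamma)$ rather than $T\gamma_2$ in the last term (equivalently, your right-hand side is the equation satisfied by $T_{vv}+T_v$, i.e., the second moment rather than the second factorial moment), and the two conventions differ by exactly $\mathbb{E}Y_n$, which is of order $n$ and hence relevant to the constant.
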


\begin{proof}
	The proof uses the same techniques as the one of Theorem~\ref{theo:ctreesize0}. In
particular, it follows again from~\cite[Th.~2.23]{Drmotabook} that $Y_n$ satisfies a central limit
theorem. Here, we set $F(z,y,v) = z e^y D(z,v)$. The technical conditions are easy to check, and
we know that $T(z,v)$ is the unique solution of $y = F(z,y,v)$.
\end{proof}

Theorems~\ref{theo:ctreesize0} and \ref{theo:dtreenumber} tell us that in a \polya tree of size
$n$ there are on average $\frac{2\gamma(\rho)}{b^2\rho}n \approx 0.15776n$ trees in $D$-forests,
and $\gamma(\rho) \approx 0.191837$ trees in the $D$-forest of a $C$-tree node. Comparing this
number with the average size of the $D$-forest $\frac{b^2\rho}{2}-1 \approx 0.216$ of a $C$-tree
node, we conclude that most trees consist of only one node. In particular, as every tree consists
of at least a root node, a component of a $D$-forest has on average approximately $0.024167$
non-root nodes. This implies, that on average only every $42^\text{nd}$ $D$-tree has more than one
node. (This is because $(0.024167)^{-1}\approx 42$.)

\section{Other P\'{o}lya structures} \label{other}
\subsection{P\'{o}lya trees with outdegree restriction}
Our work can be extended to $\Omega$-\polya trees in the same way, so we omit the proof.
For any $\Omega\subseteq \mathbb{N}_0=\{0,1,\ldots\}$ such that $0\in\Omega$ and $\{0,1\}\ne \Omega$, {\em an $\Omega$-\polya tree} is a rooted unlabeled tree considered up to symmetry and with outdegree set $\Omega$. When $\Omega=\mathbb{N}_0$, a $\mathbb{N}_0$-\polya tree is a \polya tree.

Let $a_{n}$ be the number of \polya trees of size $n$ with outdegree set $\Omega$, and $\CMcal{A}(z)$ the associated generating function. That is, $a_{n}=[z^n]\CMcal{A}(z)$.
From \polya enumeration theory \cite{poly37} and Burnside's Lemma, the generating function $\CMcal{A}(z)$ satisfies the functional equation
\begin{align}\nonumber
\CMcal{A}(z)&=z\cdot \sum_{k\in \Omega} Z(S_k;\CMcal{A}(z),\CMcal{A}(z^2),\ldots,\CMcal{A}(z^k))\\
\label{E:penum-ome}&=z\sum_{k\in \Omega}\frac{1}{k!}\sum_{\sigma\in S_k}(\CMcal{A}(z))^{\sigma_1}(\CMcal{A}(z^2))^{\sigma_2}\cdots (\CMcal{A}(z^k))^{\sigma_k}.
\end{align}
Proposition~\ref{P:uni} has been also used in \cite{PS}. It was actually implicitly stated in \cite{Bell:06} and fits into the general theorem on implicit functions in \cite{Drmotabook,FS}.
\begin{proposition}\label{P:uni}
Let $\tau$ be the unique dominant singularity of $\CMcal{A}(z)$. Then
$0<\tau<1$ and $0<\CMcal{A}(\tau)<\infty$. Furthermore, $\tau$ is the unique real solution of
\begin{align}\label{E:sol1ome}
\sum_{k\in\Omega}\frac{\partial}{\partial x}Z(S_k;x,\CMcal{A}(\tau^2),\ldots,\CMcal{A}(\tau^k))
\big|_{x=\CMcal{A}(\tau)}=\frac{1}{\tau}.
\end{align}
and $\CMcal{A}(z)$ has a local expansion of the form
\begin{align}\label{eq:polyaasymptome}
\CMcal{A}(z) &= \CMcal{A}(\tau)  - b_1\left(\tau-z\right)^{1/2} + c_1(\tau-z) + \LandauO\left((\tau-z)^{3/2}\right)
\end{align}
where $b_1>0$ is a constant and $a_{n}=[z^n]\CMcal{A}(z)$ is asymptotically
\begin{align}\label{E:ao}
a_{n}=\frac{b_{1}\sqrt{\tau}}{2\sqrt{\pi}}
n^{-3/2}\tau^{-n}(1+\CMcal{O}(n^{-1})).
\end{align}
\end{proposition}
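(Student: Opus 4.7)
The plan is to recast the functional equation \eqref{E:penum-ome} as the standard implicit schema
\begin{align*}
\CMcal{A}(z) = z\,G\bigl(\CMcal{A}(z),\,z\bigr), \qquad G(x,z) := \sum_{k \in \Omega} \frac{1}{k!} \sum_{\sigma \in S_k} x^{\sigma_1} \prod_{i=2}^{k} \CMcal{A}(z^i)^{\sigma_i},
\end{align*}
and then invoke the smooth implicit-function schema of Flajolet--Sedgewick \cite[Theorem~VII.3]{FS} (equivalently \cite[Theorem~2.19]{Drmotabook}) together with standard singularity analysis. The key observation is that once one knows a priori that $\CMcal{A}$ has some positive radius of convergence $\tau$ (which follows from the termwise domination $a_n \le t_n$, giving $\tau \ge \rho > 0$), each function $z \mapsto \CMcal{A}(z^j)$ with $j \ge 2$ is automatically analytic in the strictly larger disk $|z| < \tau^{1/2}$. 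This makes $G(x,z)$ a genuine analytic function of two complex variables on a bidisk strictly containing $(\CMcal{A}(\tau),\tau)$, with uniform convergence of the double sum guaranteed by the $1/k!$ factors and the trivial bound $|\CMcal{A}(z^i)| \le \CMcal{A}(\tau^i)$ for $|z| \le \tau$.

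Next, I would verify the preconditions of the schema: $G$ has nonnegative Taylor coefficients, $G(0,0)=1$ (the $k=0$ summand, present because $0\in\Omega$, contributes $1$ while every other summand vanishes at $z=0$), and $G$ is genuinely nonlinear in $x$, because the hypothesis $\{0,1\}\ne\Omega$ together with $0\in\Omega$ forces $\Omega$ to contain some $k\ge 2$, producing a monomial $x^k/k!$ of degree $\ge 2$. The schema then yields a unique positive pair $(\tau,\CMcal{A}(\tau))$ satisfying the characteristic system
\begin{align*}
\CMcal{A}(\tau)=\tau\,G(\CMcal{A}(\tau),\tau), \qquad 1=\tau\,G_x(\CMcal{A}(\tau),\tau),
\end{align*}
the second equation being precisely \eqref{E:sol1ome}. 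A standard monotonicity argument on the positive real axis gives $\tau \in (0,1)$ and uniqueness, and the schema simultaneously produces the square-root singular expansion \eqref{eq:polyaasymptome}, with $b_1>0$ expressed in terms of the partial derivatives of $G$ at $(\CMcal{A}(\tau),\tau)$.

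Passing from \eqref{eq:polyaasymptome} to the coefficient asymptotics \eqref{E:ao} then reduces to an invocation of the transfer theorem \cite{FS}, for which the remaining ingredient is aperiodicity: one must verify that $\tau$ is the only singularity of $\CMcal{A}(z)$ on $|z|=\tau$. This follows because the simultaneous presence in $\Omega$ of the leaf outdegree $0$ and some branching outdegree $k\ge 2$ forces $\gcd\{n : a_n>0\}$ to be eventually $1$, as $\Omega$-\polya trees of every sufficiently large size can be assembled.

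The step I expect to be the main technical obstacle is the rigorous treatment of the joint analyticity of $G(x,z)$ on a bidisk strictly containing $(\CMcal{A}(\tau),\tau)$, since $G$ is defined in terms of the auxiliary functions $\CMcal{A}(z^j)$ whose own analytic properties stem from the object we are trying to analyse; one must therefore set up the argument as a bootstrap, first establishing finiteness and positivity of $\tau$ before upgrading to the full singular expansion.
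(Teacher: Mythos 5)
The paper itself offers no proof of Proposition~\ref{P:uni}: it is justified by citation to \cite{Bell:06} and to the smooth implicit-function schema of \cite{Drmotabook,FS}. Your proposal carries out exactly the program those citations point to (recasting \eqref{E:penum-ome} as $\CMcal{A}=zG(\CMcal{A},z)$, bootstrapping the analyticity of the coefficient functions $\CMcal{A}(z^j)$ past $\tau$, verifying nonnegativity and nonlinearity of $G$, and reading off the characteristic system, whose second equation is indeed \eqref{E:sol1ome}), so in spirit it is the same approach, and most of it is sound.

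There is, however, one genuine error: your aperiodicity claim in the transfer step. It is not true that $0\in\Omega$ together with some $k\ge 2$ in $\Omega$ forces $\CMcal{A}$ to have $\tau$ as its only singularity on $|z|=\tau$. Take $\Omega=\{0,2\}$, which the paper itself treats in Section~\ref{other}: every binary P\'olya tree with $i$ internal nodes has $n=2i+1$ nodes, so $a_n=0$ for all even $n$, $T_2(z)$ is an odd function of $z$, and $-\tau$ is a second dominant singularity conjugate to $\tau$. More generally, whenever all positive elements of $\Omega$ share a common divisor $d\ge 2$, the relation $n-1=\sum(\text{outdegrees})$ forces $n\equiv 1\pmod d$, the support of $(a_n)$ has span $d$, and there are $d$ conjugate singularities on the circle of convergence. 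In that case \eqref{E:ao} as written is false (e.g.\ it would predict $a_n>0$ for even $n$ in the binary case); the correct statement restricts to admissible $n$ and carries a factor $d$ from summing the $d$ singular contributions. The fix is either to add an aperiodicity hypothesis (equivalently, $\gcd\{k-1: k\in\Omega,\ k\ge1\}\cup\{k:k\in\Omega\}$-type span equal to $1$) or to state \eqref{E:ao} along the arithmetic progression of admissible sizes; note that the proposition's phrase ``the \emph{unique} dominant singularity'' silently presupposes the former. A second, minor ordering issue: your claim that $\CMcal{A}(z^j)$ is analytic in the \emph{strictly larger} disk $|z|<\tau^{1/2}$ already uses $\tau<1$, so you must first establish $\tau<1$ (e.g.\ by exhibiting exponentially many $\{0,k\}$-trees inside the class) before the bidisk bootstrap, not as a consequence of the schema.
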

Similar to $T_n$, we consider a random P\'{o}lya tree of size $n$ with outdegree set $\Omega$, denoted by $T_n^{(d)}$, which is a tree that is selected uniformly at random from all P\'{o}lya trees of $n$ vertices and with outdegree set $\Omega$. Similar to $L_n$, let $L_n^{(d)}$ be the maximal size of a $D$-forest contained in $T_n^{(d)}$. In the same way as Theorem~\ref{T:1}, we have
\begin{theorem}\label{T:1ome}
For $0<s<1$,
\[
(1-(\log n)^{-s})\left(\frac{-2\log n}{\log\tau}\right)\le L_n^{(d)} \le (1+(\log n)^{-s})\left(\frac{-2\log n}{\log\tau}\right)
\]
holds with probability $1-o(1)$.
\end{theorem}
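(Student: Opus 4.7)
The plan is to transcribe the three-step proof of Theorem~\ref{T:1} to the $\Omega$-restricted setting: re-derive a super-critical composition scheme for $\Omega$-\polya trees, invoke Gourdon's framework \cite{Gourdon} to extract a double-exponential distribution for $L_n^{(d)}$, and close with Chebyshev's inequality.

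First, starting from the cycle-index equation \eqref{E:penum-ome}, I would isolate in each cycle index the contribution of the fixed points (the $\sigma_1$-exponent) from the contribution of the cycles of length $\ge 2$. This yields an implicit functional equation of the form $\CMcal{A}(z)=z\,\Psi_\Omega(\CMcal{A}(z),z)$, in which the only $z$-dependence of $\Psi_\Omega$ beyond the leading factor enters through the series $\CMcal{A}(z^i)$ for $i\ge 2$. Since $\CMcal{A}$ has its unique dominant singularity at $\tau$ by Proposition~\ref{P:uni}, every such $\CMcal{A}(z^i)$ with $i\ge 2$ is analytic on $|z|<\sqrt{\tau}$, and hence so is the inner ``$D$-forest'' part of $\Psi_\Omega$. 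Because $\tau<\sqrt{\tau}$, this is a super-critical composition scheme, and \eqref{eq:polyaasymptome} supplies the square-root singular behavior required by the framework of \cite{Gourdon}.

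Next, following \cite{Gourdon}, I would introduce the generating function $\CMcal{A}^{[\le m]}(z)$ of $\Omega$-\polya trees all of whose $D$-forests have size at most~$m$. It is obtained by replacing each $\CMcal{A}(z^i)$ with $i\ge 2$ in $\Psi_\Omega$ by its degree-$m$ truncation, and its implicitly defined singularity $\tau^{[m]}$ converges to $\tau$ exponentially fast in $m$. Theorem~$4$ and Corollary~$3$ of \cite{Gourdon} then yield, for $m$ and $n$ large enough and some $\varepsilon>0$,
\[
\mathbb{P}\bigl[L_n^{(d)}\le m\bigr]=\exp\!\left(-\frac{c_\Omega\,n}{m^{3/2}}\,\tau^{m/2}\right)\bigl(1+\CMcal{O}(e^{-m\varepsilon})\bigr),
\]
where $c_\Omega>0$ depends on $b_1$, $\tau$ and the derivatives of $\Psi_\Omega$ at the critical point, analogously to $c_1$ in the proof of Theorem~\ref{T:1}. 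Standard asymptotic estimates on this double exponential give $\mathbb{E}L_n^{(d)}=-\tfrac{2\log n}{\log\tau}-\tfrac{3}{\log\tau}\log\log n+\CMcal{O}(1)$ and $\mathbb{V}\mbox{ar}\,L_n^{(d)}=\CMcal{O}(1)$. A Chebyshev bound with $\varepsilon_n=(\log n)^{-s}$, identical to the one used in the proof of Theorem~\ref{T:1}, then yields the stated concentration with probability $1-o(1)$.

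The main obstacle is the first step. With a restricted outdegree set~$\Omega$, the clean factorization $T(z)=C(zD(z))$ is no longer available, because $\Omega$ couples, at each vertex, the number of $C$-tree children with the number of trees in the attached $D$-forest. One must therefore work with the more general implicit equation $\CMcal{A}=z\,\Psi_\Omega(\CMcal{A},z)$ and verify that the technical hypotheses of Gourdon's super-critical scheme, namely positivity, aperiodicity, analyticity of $\Psi_\Omega$ at $(\CMcal{A}(\tau),\tau)$, and the criticality relation $\tau\,\partial_y\Psi_\Omega|_{(\CMcal{A}(\tau),\tau)}=1$, still hold. These are essentially the conditions that underlie Proposition~\ref{P:uni}, so once that proposition is in hand, the remaining analysis is a direct transcription of the $\Omega=\mathbb{N}_0$ case.
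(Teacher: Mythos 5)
Your proposal is correct and follows essentially the route the paper intends: the paper omits the proof of Theorem~\ref{T:1ome} entirely, stating only that it goes ``in the same way'' as Theorem~\ref{T:1}, i.e.\ via Gourdon's super-critical composition framework (the double-exponential law for $L_n^{(d)}$, the resulting mean $-\tfrac{2\log n}{\log\tau}+\CMcal{O}(\log\log n)$ and $\CMcal{O}(1)$ variance, then Chebyshev with $\varepsilon_n=(\log n)^{-s}$), and your three steps reproduce exactly that argument with the correct criticality condition from \eqref{E:sol1ome} and the correct singularity shift $\tau^{[m]}-\tau\sim\tau^{m/2}$. Your explicit flagging of the obstacle that the clean factorization $\CMcal{A}(z)=C(z\,D_\Omega(z))$ is lost for general $\Omega$, so that one must verify Gourdon's hypotheses for the implicit scheme $\CMcal{A}=z\,\Psi_\Omega(\CMcal{A},z)$ directly, is a point the paper glosses over and is handled appropriately in your write-up.
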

Note that $\tau$ is determined by (\ref{E:sol1ome}). From (\ref{E:penum-ome}) we see that every P\'{o}lya tree with outdegree restriction $\Omega$ is a multiset of small P\'{o}lya trees with outdegree restriction $\Omega$. We first consider the bivariate generating function
\begin{align}\label{E:bvg1-ome}
\CMcal{A}(z,u)=uz\cdot \sum_{k\in\Omega}
Z(S_k;\CMcal{A}(z,u),\CMcal{A}_{\Omega}(z^2),\ldots,\CMcal{A}_{\Omega}(z^k))
\end{align}
For a random P\'{o}lya tree $T_n^{(d)}$ with outdegree restriction $\Omega$, we then select one automorphism of $T_n^{(d)}$ uniformly at random and all fixed points of such an automorphism form a random $C$-tree, denoted by $C_n^{(d)}$. It was also shown in \cite{PS} that the size $\vert C_n^{(d)}\vert$ in $T_n^{(d)}$ satisfies a central limit theorem and $\vert C_n^{(d)}\vert=\Theta(n)$ holds with probability $1-o(1)$.
\begin{theorem}[{\cite[Eq.~(3.9) and~(3.10)]{Ben2}, \cite[Eq.~(5.6)]{PS}}]\label{theo:ctreesize0-ome}
The size of the $C$-tree $\vert C_n^{(d)}\vert$ in a random \polya tree $T_n^{(d)}$ of size $n$ satisfies a central limit theorem where the expected value $\mathbb{E}\vert C_n^{(d)}\vert$ and the variance $\mathbb{V}\mbox{ar}\,\vert C_n^{(d)}\vert$ are asymptotically
\begin{align*}
\mathbb{E}\,\vert C_n^{(d)}\vert=\frac{n}{1+\mu}(1 + \LandauO(n^{-1}))\,\mbox{ where }
\mu=\frac{\tau^2}{\CMcal{A}(\tau)}\sum_{k\in \Omega}
\frac{\partial}{\partial x}Z(S_k;\CMcal{A}(\tau),\CMcal{A}(x^2),\ldots,\CMcal{A}(x^k))
\big|_{x=\tau}
\end{align*}
and the variance is $\mathbb{V}\mbox{ar}\,\vert C_n^{(d)}\vert=\sigma^2 n$ where $\sigma>0$. Furthermore, for any $s$ such that $0<s<1/2$, with high probability we have
\[
(1-n^{-s})\frac{n}{1+\mu}\le \vert C_n^{(d)}\vert \le (1+n^{-s})\frac{n}{1+\mu}.
\]
\end{theorem}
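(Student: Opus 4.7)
The plan is to mirror the proof of Theorem~\ref{theo:ctreesize0} verbatim and again invoke the general central limit framework of~\cite[Th.~2.23]{Drmotabook}. Concretely, I would rewrite the bivariate functional equation~\eqref{E:bvg1-ome} in the form $y = F(z,y,u)$, with
\[
F(z,y,u) \;=\; uz \sum_{k\in\Omega} Z\bigl(S_k;\, y,\, \CMcal{A}(z^2),\ldots,\CMcal{A}(z^k)\bigr),
\]
so that $\CMcal{A}(z,u)$ is the unique power-series solution. The next step is to verify the standard analytic hypotheses of the cited theorem: $F$ is analytic at $(0,0,1)$, $F(0,y,u)\equiv 0$, $F(z,0,1)\not\equiv 0$, and all coefficients $[z^ny^m]F(z,y,1)$ are nonnegative. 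Since the radius of convergence of $\CMcal{A}(z^j)$ equals $\tau^{1/j}>\tau$ for $j\geq 2$, the infinite sum defining $F$ extends to an analytic function of $(z,y)$ in a neighbourhood of $(\tau,\CMcal{A}(\tau))$; this is already implicit in Proposition~\ref{P:uni}. The characteristic equation $F_y(z,y,1)=1$ has the positive solution $(\tau,\CMcal{A}(\tau))$ by~\eqref{E:sol1ome}, which places us exactly in the hypotheses of~\cite[Th.~2.23]{Drmotabook} and yields the central limit theorem for $|C_n^{(d)}|$, together with the variance asymptotics $\V|C_n^{(d)}| = \sigma^2 n(1+\LandauO(n^{-1}))$ with $\sigma>0$ (positivity follows from the standard non-degeneracy condition, which holds since $\CMcal{A}(\tau)>0$).

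For the mean, I would use the quotient formula supplied by the same theorem,
\[
\E\,|C_n^{(d)}| \;=\; \frac{n\,F_u(z,y,u)}{\tau\,F_z(z,y,u)}\bigg|_{z=\tau,\,y=\CMcal{A}(\tau),\,u=1}\bigl(1+\LandauO(n^{-1})\bigr),
\]
and carry out the two elementary derivative computations. Because $F$ is linear in $u$, we have $F_u|_{u=1}=F|_{u=1}/1=\CMcal{A}(\tau)$. Differentiating in $z$ and evaluating at $(\tau,\CMcal{A}(\tau),1)$ gives
\[
F_z \;=\; \frac{\CMcal{A}(\tau)}{\tau} + \tau\sum_{k\in\Omega}\frac{\partial}{\partial x}Z\bigl(S_k;\,\CMcal{A}(\tau),\CMcal{A}(x^2),\ldots,\CMcal{A}(x^k)\bigr)\bigg|_{x=\tau} \;=\; \frac{\CMcal{A}(\tau)}{\tau}(1+\mu),
\]
by the definition of $\mu$ in the statement. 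Substituting gives $\E|C_n^{(d)}|=\frac{n}{1+\mu}(1+\LandauO(n^{-1}))$, as claimed.

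The concentration estimate is obtained in exactly the same way as in the proof of Theorem~\ref{theo:ctreesize0}: choosing $\varepsilon_n=n^{-s}$ with $0<s<1/2$ and applying Chebyshev's inequality, one gets
\[
\PR\Bigl(\bigl|\,|C_n^{(d)}|-\E|C_n^{(d)}|\bigr|\geq \varepsilon_n\,\E|C_n^{(d)}|\Bigr) \;\leq\; \frac{\V|C_n^{(d)}|}{\varepsilon_n^{2}\,(\E|C_n^{(d)}|)^2} \;=\; \LandauO(n^{2s-1}) \;=\; o(1),
\]
which is the desired bound.

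I expect the main technical obstacle to be the analyticity check for $\sum_{k\in\Omega} Z(S_k;\,y,\CMcal{A}(z^2),\ldots)$ at $(\tau,\CMcal{A}(\tau))$ uniformly in $\Omega$ (particularly if $\Omega$ is infinite), and the verification that this sum satisfies the aperiodicity and irreducibility conditions required by~\cite[Th.~2.23]{Drmotabook}. These points are essentially settled by Bell~\cite{Bell:06} and by the arguments used in Proposition~\ref{P:uni}, so the rest of the proof is routine bookkeeping and symbol chasing.
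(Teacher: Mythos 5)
Your proposal is correct and follows exactly the route the paper intends: the paper explicitly omits the proof of Theorem~\ref{theo:ctreesize0-ome}, stating only that the argument for Theorem~\ref{theo:ctreesize0} carries over, and your adaptation (setting $F(z,y,u)=uz\sum_{k\in\Omega}Z(S_k;y,\CMcal{A}(z^2),\ldots,\CMcal{A}(z^k))$, invoking \cite[Th.~2.23]{Drmotabook}, computing $F_z=\frac{\CMcal{A}(\tau)}{\tau}(1+\mu)$ via \eqref{E:sol1ome} and the definition of $\mu$, and finishing with Chebyshev) is precisely that adaptation, carried out in more detail than the paper itself provides. Your flagged concerns about analyticity of the infinite sum and aperiodicity (e.g.\ for $\Omega=\{0,2\}$, where only odd sizes occur) are legitimate but are already absorbed into Proposition~\ref{P:uni} and the cited literature, so they do not constitute a gap in your argument relative to the paper.
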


\begin{example}
For $\Omega=\mathbb{N}_0-\{1\}$, any $\Omega$-\polya tree is a \polya tree without nodes of degree $1$, which is also called {\em a hierarchy}. Let $T^*(z)$ be the ordinary generating function of hierarchies. Then if we remove the root of a hierarchy, we are left with a multiset of smaller hierarchies and the number of such subtrees is at least $2$. That is,
\begin{align}\label{E:hier}
T^*(z)&=z\exp\left(\sum_{i=1}^{\infty}\frac{T^*(z^i)}{i}\right)-zT^*(z)\\
\nonumber&=\frac{z}{1+z}\exp\left(\sum_{i=1}^{\infty}\frac{T^*(z^i)}{i}\right)
=z+z^3+z^4+2z^5+3z^6+\cdots.
\end{align}
The generating function of hierarchies was also derived in \cite{Antoine} where the size of a hierarchy is defined as the number of leaves, instead of the number of nodes. From (\ref{E:sol1ome}) we find that $\tau$ is the unique solution of
\begin{align*}
&\quad \sum_{k\in\mathbb{N}_0}\frac{\partial}{\partial x}Z(S_k;x,T^*(\tau^2),\ldots,T^*(\tau^k))\vert_{x=T^*(\tau)}
-\frac{\partial}{\partial x}Z(S_1;x)\vert_{x=T^*(\tau)}\\
&=\exp\left(\sum_{i=1}^{\infty}\frac{T^*(z^i)}{i}\right)-T^*(\tau)
=\tau^{-1}T^*(\tau)=\tau^{-1}.
\end{align*}
This yields $T^*(\tau)=1$. If we again differentiate both sides of (\ref{E:hier}) and take the $n$-th coefficient of $z$, we get a recursion of hierarchies, namely, let $t^*(n)=[z^n]T^*(z)$. Then we have $t^*(1)=1$, $t^*(2)=0$ and for $n\ge 3$,
\begin{align*}
t^*(n)=\frac{1}{n-1}\sum_{i=1}^{n-2}(t^*(n-i)+t^*(n-i-1))\sum_{m\vert i} mt^*(m)+\frac{1}{n-1}\sum_{\substack{m\vert (n-1)\\ m\ne (n-1)}} mt^*(m).
\end{align*}
With the help of this recursion, we can use Maple to generate the numbers of hierarchies, by which we can find the approximate solution $\tau\approx 0.4580838$ of $T^*(\tau)=1$. Furthermore, for the case of hierarchies, we compute $\mu$ in Theorem~\ref{theo:ctreesize0-ome}, which is
\begin{align*}
\mu=\frac{\tau^2}{T^*(\tau)}\exp(T^*(\tau)) 
\left(\exp(\sum_{i=2}^{\infty}\frac{T^*(z^i)}{i})\right)'_{z=\tau}
\approx 0.6701252,
\end{align*}
where we used $T^*(\tau)=1$.
\hfill $\diamondsuit$
\end{example}

\begin{example}
For $\Omega=\{0,2\}$, any $\Omega$-\polya tree is a binary \polya tree. Let $T_2(z)$ be the ordinary generating function of binary \polya trees. Then we have
\begin{align}\label{E:bpo}
T_2(z)=z+\frac{1}{2}z (T_2(z))^2+\frac{1}{2}z T_2(z^2).
\end{align}
From (\ref{E:sol1ome}) we find that $\tau$ is the unique solution of
\begin{align}\label{E:unib}
\frac{\partial}{\partial x}(Z(S_{0})+Z(S_2;x,T_2(\tau^2)))\vert_{x=T_2(\tau)}=T_2(\tau)=\tau^{-1},
\end{align}
and as before we can derive a recursion from (\ref{E:bpo}), namely, let $t_2(n)=[z^n]T_2(z)$. Note that every binary \polya trees has an even number of nodes, that is, for even $n$, $t_2(n)=0$. For odd $n$, $n\ge 3$, we have
\begin{align*}
t_2(n)=\frac{1}{2}\sum_{i=1}^{n-2}t_2(i)t_2(n-1-i)
+\frac{1}{2}t_{\lfloor\frac{n-1}{2}\rfloor},
\end{align*}
and $t_2(1)=1$. With the help of this recursion, we can use Maple to generate the numbers of binary \polya trees, by which we can find the approximate solution $\tau\approx 0.6348553$ of $T_2(\tau)=\tau^{-1}$. Furthermore, for the case of binary \polya trees, we compute $\mu$ in Theorem~\ref{theo:ctreesize0-ome}, which is
\begin{align*}
\mu=\frac{\tau^2}{T_2(\tau)}(1+\frac{1}{2}\frac{\partial}{\partial x}T_2(x^2)\vert_{x=\tau})=\tau^3(1+\tau T_2'(\tau^2))\approx 0.5330644,
\end{align*}
where we used $T_2(\tau)=\tau^{-1}$ from (\ref{E:unib}).
\hfill $\diamondsuit$
\end{example}

%

\subsection{Rooted identity trees}
Rooted identity trees are a further \polya structure which has been listed and treated in \cite{Antoine}. They do not fit into the framework of $\Omega$-\polya trees. Nevertheless, there are some analogies to \polya trees when our framework is applied. This section presents a discussion of what happens when we use our framework on rooted identity trees, which will eventually lead to a combinatorial interpretation of OEIS sequence A052806. 

A rooted identity tree is a P\'{o}lya tree whose automorphism group is the identity group. Let $R(z)$ denote the ordinary generating function of rooted identity trees. Then we can identify every rooted identity tree as a powerset of smaller rooted identity trees, which is a multiset of rooted identity trees that involves no repetition; see \cite{FS}. This gives
\begin{align}\label{E:rid}
R(z)&=z\exp\left(\sum_{i=1}^{\infty}(-1)^{i-1}\frac{R(z^i)}{i}\right)
=z\exp(R(z))\exp\left(\sum_{i=2}^{\infty}(-1)^{i-1}\frac{R(z^i)}{i}\right).
\end{align}
The first few terms of $R(z)$ are
\begin{align*}
R(z)=z+z^2+z^3+2z^4+3z^5+6z^6+12z^7+25z^8+52z^9+\cdots.
\end{align*}
Note that the sets of rooted identity trees can be generated as multiset of rooted identity trees
with signed weights, realizing an inclusion--exclusion process. Let $\CMcal{R}$ be the set of all rooted identity trees. We consider the multiset of the elements in $\CMcal{R}$, which is denoted by $\mbox{MSET}(\CMcal{R})$, and as before we use $\mbox{MSET}^{(\ge 2)}(\CMcal{R})$ to denote the multiset of rooted identity trees where each tree appears at least twice if it appears at all. Here a $D^*$-forest of size $n$ is an element of $\mbox{MSET}^{(\ge 2)}(\CMcal{R})$. The generating function for the $D^*$-forests of rooted identity trees is
\begin{align*}
D^*(z)=\exp\left(\sum_{i=2}^{\infty}(-1)^{i-1}\frac{R(z^i)}{i}\right)
&=\sum_{k\ge 0}Z(S_k;0,-R(z^2),\cdots,(-1)^{k-1}R(z^k))\\
&=\sum_{k\ge 0}\frac{1}{k!}\sum_{\substack{\sigma\in S_k\\ \sigma_1=0}}
(-1)^{\sigma_2+\sigma_4+\cdots}(R(z^2))^{\sigma_2}\cdots (R(z^k))^{\sigma_k}.
\end{align*}
Then their cumulative weights are given by
\begin{align*}
	d^*_n=[z^n]D^*(z)= \sum_{\substack{F \in \operatorname{MSET}^{(\geq 2)}( \CMcal{R} )\\|F| = n}} \frac{1}{|\Aut(F)|}\sum_{\sigma\in \Aut(F)\text{\scriptsize such that } \sigma_1=0}(-1)^{\sigma_2+\sigma_4+\cdots}
\end{align*}
and a single term of this sum is the (signed) weight of a $D^*$-forest $F$. The first few terms of $D^*(z)$ are
\begin{align*}
D^*(z)=1-\frac{1}{2}z^2+\frac{1}{3}z^3-\frac{5}{8}z^4+\frac{1}{30}z^5
+\frac{11}{144}z^6-\frac{139}{840}z^7+\cdots.
\end{align*}
\begin{example}
The smallest $D^*$-forest is of size $2$, and it consists of a pair of single nodes. The only 
fixed point free automorphism is a transposition, thus $d^*_2=-1/2$. For $n=3$, the $D^*$ consists
of three single nodes. The only fixed point free automorphisms are the $3$-cycles, thus
$d^*_3=2/6=1/3$. 

For $n=4$, a $D^*$-forest consists either of four single nodes, or of two identical trees, each
consisting of two nodes and one edge. In the first case we have six $4$-cycles  and three pairs of
transpositions. In the second case we have one transposition swapping the two trees. Thus, $d^*_4 = (-6+3)/24-1/2 =-5/8$. \hfill{$\diamondsuit$}
\end{example}
Now we define bivariate generating function in analogy to what we did for \polya trees. Define a function via the functional equation
\begin{align*}
R_c(z,u)=zu\exp(R_c(z,u))\exp\left(\sum_{i=2}^{\infty}(-1)^{i-1}
\frac{R(z^i)}{i}\right).
\end{align*}
and set
\begin{align}
R_c(z,u)=\sum_{n\ge 0}r_{c,n}(u)z^n\quad \mbox{ where }\quad
r_{c,n}(u)=\sum_{\substack{T\in\text{\scriptsize MSET}(\CMcal{R})\\ \vert T\vert=n}}r_T(u). \label{Rc}
\end{align}
If we set $u=1$ then we get back the generating function of rooted identity trees. Note that the coefficients $[u^k]r_{c,n}(u)$ do not have the nice interpretation as cumulative weight of all $C$-trees identity trees of size $k$ contained in rooted identity trees of size $n$. This is because a rooted identity tree has only the trivial automorphism which means that every vertex is a fixed point and thus the whole tree is its $C$-tree. But this is in contradiction to $R(z)\neq C(z)$. 

On the other hand, we have $R(z)=C(zD^*(z))$ meaning that a rooted identity tree is a $C$-tree to which $D^*$-forests have been attached. But due to the signed weights the cumulative weight of all decompositions of \polya tree $T$ into a $C$-tree and a set of $D^*$-forests is zero if $T$ is not a rooted identity tree and 1 otherwise, as the following computation shows: 
In the same way as by Theorem~\ref{T:2} it follows that for $T\in\mbox{MSET}(\CMcal{R})$ and $\vert T\vert=n$,
\begin{align*}
r_T(u)=Z(\mbox{Aut}(T);u,-1,1,\ldots)&=\frac{1}{\vert\mbox{Aut}(T)\vert}
\sum_{\sigma\in\text{\scriptsize Aut}(T)}(-1)^{\sigma_2+\sigma_4+\cdots}u^{\sigma_1}.
\end{align*}
Clearly, if $T\in\CMcal{R}$, then $\vert\mbox{Aut}(T)\vert=1$ and $\sigma_1=n$, thus $r_T(u)=u^n$. It should be noted that if $T$ is not a rooted identity tree, 
{\em i.e.}, $T\not\in\CMcal{R}$ and $\vert T\vert=n$, we have
\begin{align*}
Z(\mbox{Aut}(T);1,-1,1,\ldots)&=\frac{1}{\vert\mbox{Aut}(T)\vert}
\sum_{\sigma\in\text{\scriptsize Aut}(T)}(-1)^{\sigma_2+\sigma_4+\cdots}=0,
\end{align*}
which implies that
\begin{align*}
[z^n]R(z)=\sum_{\substack{T\in\CMcal{R}\\ \vert T\vert=n}}r_T(1)=\sum_{\substack{T\in\text{\scriptsize MSET}(\CMcal{R})\\ \vert T\vert=n}}r_T(1).
\end{align*}
\begin{example}
For $n=3$ we have two \polya trees, namely the chain $T_1$ and the cherry $T_2$. Both belong to the multiset of rooted identity trees. Obviously, $\Aut(T_1) = \{ \text{id} \}$, and $\Aut(T_2) = \{\text{id}, \sigma\}$, where $\sigma$ swaps the two leaves but the root is unchanged. This contributes a minus sign to 
$r_{T_2}(u)$. Thus,
	\begin{align*}
		r_{T_1}(u) &= u^3, &
		r_{T_2}(u) & = \frac{1}{2} (u^3 - u).
	\end{align*}
Note that the cherry $T_2$ is not a rooted identity tree, so $r_{T_2}(1)=0$, while the chain $T_1$ is a rooted identity tree, so $r_{T_1}(u)=u^3$.

For $n=4$ we have the four \polya trees shown in Figure~\ref{F:2}. Except $T_3$, these trees belong to the multiset of rooted identity trees. 
Their automorphism groups are given by $\Aut(T_1) = \Aut(T_2) = \{ \text{id} \}$, and
$$\Aut(T_4) = \{\text{id}, (v_2\,v_3), (v_3\,v_4), (v_2\,v_4), (v_2\, v_3\, v_4), (v_2\,v_4\,v_3) \} \cong S_3.$$
See Figure~\ref{F:0}. This gives
	\begin{align*}
		r_{T_1}(u) &= p_{T_2}(u) = u^4, &
		r_{T_4}(u) &= \frac{1}{6} (u^4 - 3u^2 + 2u).
	\end{align*}
Both $T_1$ and $T_2$ are rooted identity trees, while $T_4$ is not. \hfill{$\diamondsuit$}
\end{example}

In the same way as we got the composition scheme in \eqref{eq:polyadeco}, we can rewrite $R_c(z,u)$ as $R_c(z,u)=C(uzD^*(z))$. 
The expected total weight of all $C$-trees contained in all \polya trees of size $n$, according to
the weights of their decompositions into a $C$-tree and a set of $D^*$-forests, is the $n$-th coefficient of $R_c(z)$, which is
\begin{align*}
	R_c(z) := \left.\frac{\partial}{\partial u} R_c(z,u) \right|_{u=1} = \frac{R(z)}{1 - R(z)} = z + 2z^2 + 4z^3 + 9z^4 + 20 z^5 + 46 z^6 + \cdots.
\end{align*}
By construction, recall \eqref{Rc}, these numbers count the number of points which are fixed
points in all automorphisms of \polya trees that are generated by a root to which rooted identity
trees are attached. For example consider the \polya trees of size $4$ shown in Figure~\ref{F:2}.
The trees $T_1$, $T_2$, and $T_4$ are constructed in this way. In these three trees there are in
total nine points which are always fixed points. Yet, $T_4$ is not a  rooted identity tree. Note that these numbers also count a simple grammar, see OEIS~A$052806$ \cite{Sloane}.


\begin{remark}
It would be desirable to have a similar relation between rooted identity trees and $C$-trees as we have between $C$-trees and \polya trees. However, when setting 
$C(z) = R(z E(z))$ we obtain 
$E(z) = 1+\frac{1}{2}z^2-\frac{1}{3}z^3+\frac{11}{8}z^4-\frac{6}{5}z^5+\frac{629}{144}z^6 + \cdots$, 
a power series with not only nonnegative coefficients. Thus there is no straightforward interpretation in the desired form. 
\end{remark}

\section{Conclusion}\label{S:last}
In this paper we develop a combinatorial framework to describe the relation between \polya trees and simply generating trees. Since we kept the framework light, it is not strong enough to reprove the functional limit theorem presented by Panagiotou and Stufler~\cite{PS}, but it yields a description to this limit theorem which is to our opinion more elementary and more easily accessible to combinatorialists. In addition, we provide not only an alternative proof of the known big-$O$ result on the maximal size of $D$-forests in a random \polya tree, but are able to extend this result. We provide a lower bound of the same order and also precise constants in both bounds. By interpreting all weights on $D$-forests and $C$-trees in terms of automorphisms associated to a \polya tree, we derive the limiting probability that for a random node $v$ the attached $D$-forest $F_n(v)$ is of a given size as well as some structural properties. 

In view of the connection between Boltzmann samplers and generating functions, it comes as no surprise that the ``colored'' Boltzmann sampler from \cite{PS} is closely related to a bivariate generating function. But the unified framework in analyzing the (bivariate) generating functions offers stronger results on the limiting distributions of the size of the $C$-trees and the maximal size of $D$-forests as well as more detailed knowledge on the $D$-forests within a random \polya tree.

\section*{Acknowledgments } \noindent The authors thank the referees for their feedback. 


\end{document}